\title{On group actions with simple Lebesgue spectrum} 
\author[A.\ Prikhod'ko]{Alexander Prikhod'ko}
\address{Department of Mechanics and Mathematics, Moscow State University}
\email{sasha.prihodko@gmail.com}
\begin{document}
%

\begin{abstract}
In work \cite{LebesgueFlows} a set of ergodic flows with simple Lebesgue spectrum is found, 
and the construction of these flows is based on the phenomenon of existence of 
{\it Littlewood-type flat polynomials\/} with coefficients $0$ and~$1$ 
on the group~$\Set{R}$, which is closely related to the algebraic and arithmetic properties 
of $\Set{R}$ as {\it a~field}. Thus, if we think about a~hypothetical extension of this phenomenon 
to general Abelian groups and its futher applications to ergodic group 
actions with simple Lebesgue spectrum, the method used in~\cite{LebesgueFlows} could be applied to 
a very specific class of groups including, for example, $p$-adic fields. 
At~the~same time, in some cases it is posible to generalize the flatness phenomenon 
applying a~sort of straightforward analytic technique. 
In this paper we illustrate this argument and propose 
a method of such kind that helps to pass from the case of the group~$\Set{R}$ 
to its Cartesian product $\Set{R}^d$. 
We establish the existence of $\Set{R}^d$-actions with Lebesgue spectrum 
of multiplicity one using the synthesis of the original construction of a flow 
introduced in~\cite{LebesgueFlows} and this new analytic method.   

The work is supported by grants RFFI \No\,11-01-00759-a. 

{\it Keywords\/}: spectral theory, rank one, ergodic flows, ergodic group actions, 
$(C,F)$-actions, mixing, simple Lebesgue spectrum, Littlewood polynomials, Riesz products, 
diophantine approximations 
\end{abstract}

\newcounter{nfigure}[section]
\renewcommand{\thenfigure}{\thesection.\arabic{nfigure}}

\maketitle

 

\maketitle 

\section[Introduction]{Introduction. Rank one flows with simple Lebesgue spectrum}

\subsection[Spectral invariants]{Spectral invariants of ergodic group actions}

Let us consider an invertible transformation $T$ of the standard Lebesgue probability space $(X,\mu)$ 
preserving measure~$\mu$. We require that ${T \Maps X \to X}$ is an invertible map 
such that both $T$ and $T^{-1}$ are measurable and ${\mu(TA) = \mu(T^{-1}A) = \mu(A)}$ 
for any measurable set~$A$. It follows from Rokhlin's theorem (see~\cite{Rokhlin49}) that 
without loss of generality we can assume that $(X,\mu)$ is 
the unit segment $[0,1]$ with the standard Lebesgue measure. 

The {\it Koopman operator\/} $\hat T$ in $L^2(X,\mu)$ associated with~$T$ (see~\cite{KSF,LemEncycloSpTh}) 
is defined as 
\begin{equation}
	\hat T \Maps \varphi(\x) \to \varphi(T\x), \qquad \x \in X, \quad \varphi \in L^2(X,\mu). 
\end{equation}
Clearly, $\hat T$ is a unitary operator, hence, by spectral theorem $\hat T$ is identified 
up to unitary equivalence by the measure of maximal spectral type~$\sigma$ 
and the multiplicity function $\Mult(z)$. The spectral type $\sigma$ of a unitary operator
is a Borel measure on the unit circle in the complex plane. A~great progress made 
in the spectral theory of measure preserving transformations and group actions during last years
(e.g.~see~\cite{Ageev05,AnosovOKratnostyah,DanilenkoRyzh,KatokLemOmMultFuncT,LemEncycloSpTh}), 
though the it is still a~complicated problem to classify the pairs $(\sigma,\Mult(z))$ 
that can appear as spectral invariants of some dynamical system. 

In this paper we deal mainly with measure preserving actions $T^t$ of the group $\Set{R}$, 
refered to as {\it measurable flows}, as well as $\Set{R}^d$-actions. 
The spectral invariants $(\sigma,\,\Mult(z))$ of the unitary representation 
${\hat T}^t$ associated with an $\Set{R}^d$-action are defined 
in the same way like in the case of a~single unitatry operator, but in this case the measure 
of maximal spectral type $\sigma$ is a Borel measure on ${{\Hat{\Set{R}}}^d = \Set{R}^d}$.

\subsection{Rank one flows}

Let us consider an increasing sequence $(h_n)_{n=1}^\infty$, ${h_n > 0}$, and 
the corresponding sequence of segments ${I_n = [0,h_{n+1}]}$. 
Suppose that for each~$n$ a~finite collection of {\em disjoint\/} subintervals 
\begin{equation}
	(\om_n(j),\om_n(j)+h_n) \subset I_{n+1}, \qquad j = 0,\dots,q_n-1, 
\end{equation}
is given such that ${\om_n(j) + h_n \le \om_n(j+1)}$, and define the corresponding projection 
${\phi_n \Maps I_{n+1} \to I_n}$ such that ${\phi_n(\om_n(j) + t) = t}$ for any $j$ and ${0 \le t \le h_n}$ 
and ${\phi_n(t_{n+1}) = 0}$ otherwise. Notice that $\phi_n$ is continuous as a map from 
$I_{n+1}$ to $I_n$ if we identify edge points $0$ and~$h_n$, i.e.\ 
if we consider $\phi_n$ as a continuious map of degree~$q_n$ from the circle $\Set{R} / h_{n+1}\Set{Z}$ 
to the circle $\Set{R} / h_n\Set{Z}$. The map $\phi_n$ is a~linear map with derivative~$1$ at 
any interval ${(\om_n(j),\om_n(j)+h_n)}$, and $\phi_n$ is constant on the complement to these intervals. 
In other words, $\phi_n$ is a formal representation of the following dynamical process: 
a~point $x_n(t)$ moves in the segment $I_n$ with the velocity~$1$ and 
after arriving to the right edge $h_n$ of the segment the point $x_n(t)$ waits for time 
\begin{equation}
	s_{n,j} = \om_n(j+1) - \om_n(j) - h_n
\end{equation}
depending on the index $j$, 
and after this time is passed continues moving from the left edge $0$ of the segment~$I_n$. 
The values $s_{n,j}$ in terms of cutting-and-stacking construction%
\footnote{
	For the common background of rank one transformations from the spectral point of view 
	the reader can refer to \cite{elAbdal3}, \cite{AbPaPr} and \cite{Prikhodko}. 
	}
are called {\it spacers\/} between subsequent subintervals $(\om_n(j),\om_n(j)+h_n)$ 
(recall that the edge points  are topologically identified). 

\begin{figure}[th]
  \centering
  \unitlength=1mm
  \includegraphics{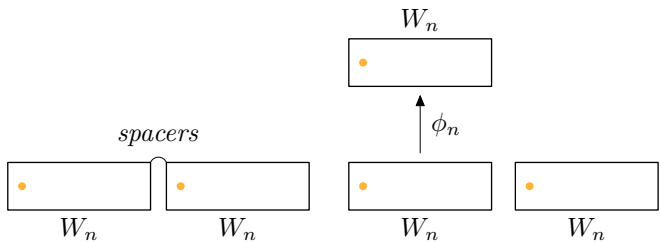} 
  \caption{Construction of a rank one flow: definition of the projection $\phi_n$} 
  \label{fRankOneFlowPhiN}
\end{figure}

Let us define the phase space $X$ of the flow to be the inverse limit of the spaces $(I_n,\cB_n)$ 
endowed with the Borel $\sigma$-algebra $\cB_n$ with respect to the maps~$\phi_n$, i.e.\ set 
\begin{equation}
	X = \{\x = (x_1,x_2,\dots,x_n,\dots) \where x_n \in I_n,\ \phi_n(x_{n+1}) = x_n\}. 
\end{equation}
The following condition ensures the correctness of the construction:
\begin{equation}
	\prod_{n=1}^\infty \frac{h_{n+1}}{q_nh_n} < \infty. 
	\label{eRankOneCorrectness}
\end{equation}
Namely, if \eqref{eRankOneCorrectness} is satisfied then there exist measures 
${\mu_n = (1-\gamma_n)\la_n + \gamma_n\delta_0}$, ${\gamma_n \to 0}$ as ${n \to \infty}$ 
and ${d\la_n = h_n^{-1}dt}$, such that ${\phi_n^* \mu_{n+1} = \mu_n}$, 
and we can define the measure $\mu$ on the limit space $X$ coinciding with $\mu_n$ 
after projecting to~$I_n$
(see \cite{Prikhodko} for further technical details). 

Now let us define the map $T$ on the space $(X,\mu)$ as follows. 
Let us fix ${t \in \Set{R}}$. For almost every point ${\x = (x_1,x_2,\dots,x_n,\dots)}$ in $X$ 
the following is true: ${|t| < x_n < h_n-|t|}$ starting from some index~$n_0$. We~define 
$Tx$ by the relation 
\begin{equation}
	(Tx)_{n=n_0}^\infty = (x_{n_0}+t,\dots,x_n+t,\dots) 
\end{equation}
and complete the sequence of coordinates $(Tx)_n$ for indexes $n$ smaller than $n_0$ 
using the fundamental relation ${\phi_n((Tx)_{n+1}) = (Tx)_n}$. 
It can be easily verified that $T$ is an invertible measurable transformation on~$X$ 
preserving the measure~$\mu$.

\subsection{Flows with simple Lebesgue spectrum and Littlewood polynomials}

\begin{thm}[see \cite{LebesgueFlows}]
\label{thSimpleLebesgueSpR}
There exist rank one flows with simple Lebesgue spectrum. 
\end{thm}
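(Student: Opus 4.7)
The plan is to combine two well-known ingredients from the rank-one theory. First, every rank-one flow automatically has simple spectrum (a classical fact going back to Chacon/Baxter and adapted to the flow setting), so the theorem reduces to arranging that the maximal spectral type of the rank-one flow built from $(h_n, q_n, \om_n(\cdot))$ is equivalent to Lebesgue measure on $\Hat{\Set{R}} = \Set{R}$. Second, for the tower construction described above, the spectral type is controlled by the normalized trigonometric polynomials
\begin{equation*}
P_n(\la) \;=\; \frac{1}{\sqrt{q_n}} \sum_{j=0}^{q_n-1} e^{2\pi i \la\, \om_n(j)},
\end{equation*}
and, up to the vanishing atomic correction coming from the $\gamma_n\delta_0$ parts of the base measures $\mu_n$, the measure of maximal spectral type $\sigma$ is a weak limit (on compacts of $\Set{R}$) of $|P_n(\la)|^2\,d\la$. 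So it suffices to choose the spacers $s_{n,j}$, equivalently the positions $\om_n(j) \in [0,h_{n+1}]$ subject to $\om_n(j+1) - \om_n(j) \ge h_n$, in such a way that $|P_n|^2 \to 1$ locally uniformly on $\Set{R}$.

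The heart of the proof is then to produce, at each scale~$n$, an ``ultra-flat'' Littlewood-type polynomial with $0/1$ coefficients -- meaning that $P_n$ is built from a $0/1$-indicator of the frequency set $\{\om_n(j)\}$ -- satisfying the rank-one spacing constraint $\om_n(j+1)-\om_n(j)\ge h_n$ together with a flatness estimate $\||P_n|^2 - 1\|_{C([-\Lambda_n,\Lambda_n])} \to 0$ for a sequence $\Lambda_n\to\infty$ chosen fast enough that the limiting measure fills out all of~$\Set{R}$. I would attempt this by a randomized choice of the $\om_n(j)$ inside $[0,h_{n+1}]$: if the positions are drawn from a suitable i.i.d.\ distribution respecting the spacing, subgaussian concentration for sums of independent unimodular exponentials, combined with a net argument over $\la \in [-\Lambda_n,\Lambda_n]$ and the union bound, shows that with positive probability the resulting $P_n$ is flat over the prescribed window. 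The continuous frequency parameter $\om_n(j) \in \Set{R}$ is essential here: it lets one perturb frequencies on arbitrarily fine scales and kill diophantine resonances that would obstruct flatness if the $\om_n(j)$ were forced to lie in an arithmetic progression or a rational lattice.

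Finally, I would verify that, under \eqref{eRankOneCorrectness} and a sufficiently fast growth of $q_n$, $h_{n+1}/h_n$ and the window $\Lambda_n$, the weak limit of $|P_n|^2\,d\la$ is indeed Lebesgue on every compact subset of~$\Set{R}$, the correction terms $\gamma_n \delta_0$ do not contaminate the limit spectral type, and simplicity is inherited from rank one. The main obstacle is unquestionably the flat-polynomial step: the combination of the $\{0,1\}$-coefficient constraint with the lower bound $\om_n(j+1) - \om_n(j) \ge h_n$ forces one to solve a genuine Littlewood-type extremal problem, and -- as emphasized in the introduction -- it is precisely here that the field (and arithmetic) structure of~$\Set{R}$ enters, motivating both the construction in \cite{LebesgueFlows} and the analytic extension to $\Set{R}^d$ pursued in the rest of the present paper.
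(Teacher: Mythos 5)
Your overall architecture --- rank one gives simplicity of the spectrum, the maximal spectral type is a weak limit of the densities $|P_n|^2$ built from the spacer polynomials, so everything reduces to making the $P_n$ flat --- is exactly the skeleton of the paper's proof. The gap is in the one step you yourself call the heart of the matter: producing the flat polynomials by a \emph{randomized} choice of the frequencies $\om_n(j)$. This cannot work, neither in the sup norm you ask for nor even in $L^1$. If the $\om_n(j)$ are drawn at random (i.i.d.\ positions, or i.i.d.\ spacers respecting the gap $\ge h_n$), then at each fixed $\lambda$ bounded away from zero the normalized sum $P_n(\lambda)=q_n^{-1/2}\sum_j e^{2\pi i\lambda\om_n(j)}$ behaves, by the central limit theorem, like a standard complex Gaussian $Z$: one does get $\mathbb{E}|P_n(\lambda)|^2\to 1$, but the fluctuations of $|P_n(\lambda)|^2$ around $1$ are of order one, so $\||P_n|^2-1\|_{L^1(a,b)}$ tends to the positive constant $\mathbb{E}\bigl|\,|Z|^2-1\bigr|=2/e$ rather than to $0$, while the sup norm over $[-\Lambda_n,\Lambda_n]$ actually diverges (the maximum of $|P_n|^2$ grows logarithmically in the number of effectively independent sample points of the window). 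Subgaussian concentration plus a net argument only shows that $|P_n|^2$ is $O(\log)$ and equidistributes on average; it does not force $|P_n|$ near $1$. This is not a technicality: Bourgain's theorem \cite{Bourgain}, cited in the paper, says precisely that for Ornstein's randomly-spaced rank one constructions the Riesz product $\prod_n|P_n|^2$ is almost surely \emph{singular}. Randomness is the enemy of flatness here, which is why the Littlewood-type problem is hard.

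What the paper (following \cite{LebesgueFlows}) actually does at this step is deterministic and analytic: it takes the explicit frequency function $\om(y)=m\frac{q}{\beta^2}e^{\beta y/q}$, a perturbation of a quadratic (Gauss-sum) phase, and proves $L^1$-flatness --- not uniform flatness, which is false, and not flatness near $\lambda=0$ or at infinity, but only on windows $G_{a,b}=(-b,-a)\cup(a,b)$ --- together with a uniform bound $|\cP_n|\le M^{1/2}$ (Theorems~\ref{thOnFlatPolyR} and~\ref{thOnFlatPolyRExplicit}). Two further points you pass over too quickly also require real work. First, $L^1(G)$-flatness of the individual factors with $\eps_n\to0$ does \emph{not} by itself give $L^1$-convergence of the infinite product $\prod_n|\cP_n|^2$: the paper exhibits uniformly bounded factors with $\|Q_n-1\|_1=2^{-n}$ whose product converges to a delta function, and convergence is instead secured by the quantitative condition $\sum_n(M^n\eps_n)^{1/2}<\infty$ via a Borel--Cantelli and dominated-convergence argument (Lemma~\ref{lemLOneConverges}). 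Second, because flatness is only available on windows separated from the origin, mass of $\sigma_n$ can escape to the point $\{0\}$ in the weak limit; the resulting possible atom at zero is excluded not by controlling the $\gamma_n\delta_0$ corrections in the base measures, as you suggest, but by invoking the ergodicity of the flow. With the randomized step replaced by the explicit flat polynomials and these two repairs, your outline would match the paper's proof.
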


The principal analytic argument that we can use to find a rank one flow with Lebesgue spectrum%
\footnote{
	Remark that any rank one transformation has simple spectrum~\cite{LemEncycloSpTh} 
	and the same is true for any rank one flow. 
	} 
is the {\it flatness phenomenon\/} for the class of polynomials on the group~$\Set{R}$ 
\begin{equation}
	\cM^{\Set{R}} = \Bigl\{ \cP(\tau) = \frac1{\sqrt{q}} \sum_{y=0}^{q-1} e^{2\pi i\, \tau\om(y)} \where
		\om(y) < \om(y+1),\ \om(y) \in \Set{R},\ q \ge 2 \Bigr\} 
\end{equation}
called {\it Littlewood polynomials\/} with coefficients in $\{0,1\}$. 
This question goes back to the famous work due to J.\,Littlewood \cite{Littlewood} 
(see also~\cite{ErdelyiLittlewoodType02}) 
as well as investigations on 
Hardy--Littlewood series \cite{MaryWeissOnHLSeries}. 
The~Littlewood's hyposesis on flat polynomials is asking whether one can find 
a {\it unimodular\/} polynomial 
\begin{equation}
	P(z) = \frac1{\sqrt{N}} \sum_{n=0}^{N-1} a_n z^n, \qquad |a_n| = 1, \qquad N \ge 2, 
\end{equation}
such that $P(z)$ is $\eps$-ultraflat on the unit circle in the complex plane for any given~$\eps\:$? 

A~complex polynomial $P(z)$ is called $\eps$-{\it ultraflat\/} if 
\begin{equation}
	\max_{z \in S^1} \:\Bigl|\, |P(z)|-1 \,\Bigr| < \eps, \qquad S^1 = \{z \in \Set{C} \where |z| = 1\}. 
\end{equation}
This question was answered positively by J.-P.\,Kahane \cite{Kahane80}, 
though, the problem of flatness in the class of polynomials with coefficients in $\{-1,+1\}$ 
as well as in the class $\cM^{\Set{Z}}$ of polynomials with coeffitients in $\{0,1\}$ 
is wide open 
(for references and discussion see \cite{AbPaPr}, \cite{Downar}, 
	\cite{ErdelyiLittlewoodType02}, \cite{Guenais}, \cite{LebesgueFlows}). 
It is shown in paper \cite{LebesgueFlows} that in contrast to the classical flatness problem 
if $\cM^{\Set{Z}}$ that looks rather difficult 
and no reasonable arguments are known for the answer to be ``yes'' or ``no'', 
if we pass to the class $\cM^{\Set{R}}$ of polynomials on~$\Set{R}$ the answer in ``yes'' 
in $L^1$-sense on any compact subset of~$\Set{R}$ and one can find explicit examples of flat sums. 

\begin{defn}
Let us call a {\it window\/} a set of two symmetric intervals isolated both from $0$ and infinity: 
\begin{equation}
	G = (-b,-a) \cup (a,b), \qquad 0 < a < b.  
\end{equation}
\end{defn}

\begin{defn}
We say that a polynomial $\cP(t)$ on the real line~$\Set{R}$ 
is $\eps$-{\it flat\/} in~$L^p(G)$ (or $L^p(G)$-$\eps$-{\it flat\/}) if 
\begin{equation}
	\bigl\|\: |\cP(\tau)| - 1 \:\bigr\|_{L^p(G)} < \eps.  
\end{equation}
\end{defn}

\begin{thm}[see \cite{LebesgueFlows}]
\label{thOnFlatPolyR}
For any ${\eps > 0}$ and a window $G$ there exists a polynomial $\cP(t)$ 
which is $\eps$-flat both in $L^1(G)$ and $L^2(G)$ and satisfies estimate ${ |\cP(t)| \le M }$ 
for ${t \in G}$ with a global constant~$M$. 
\end{thm}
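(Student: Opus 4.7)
The plan is to exploit the freedom to place the frequencies $\omega(y)$ anywhere in $\Set{R}$ (unlike in $\cM^{\Set{Z}}$) to engineer enough destructive interference on $G$ that $|\cP|$ is close to $1$ there, while keeping $\cP$ pointwise controlled. The starting identity is
\[
|\cP(\tau)|^2 = 1 + R(\tau), \qquad R(\tau) := \frac{1}{q}\sum_{y \ne z} e^{2\pi i\tau(\omega(y)-\omega(z))},
\]
so the problem reduces to making $R$ small in $L^2(G)$; Cauchy--Schwarz on the bounded set $G$ (together with the pointwise bound $M$) then upgrades $L^2$-smallness of $|\cP|^2 - 1$ into $L^1$- and $L^2$-smallness of $|\cP| - 1$.

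The construction I would use is a randomised one. Fix a large scale parameter $T$, a smooth density $\phi$ supported in $[0,T]$, and sample $\omega(0) < \omega(1) < \dots < \omega(q-1)$ as the ordered statistics of $q$ i.i.d.\ draws from $\phi$. A direct expectation computation gives $\E R(\tau) = (q-1)|\hat\phi(\tau)|^2$, which can be made uniformly small on $G = (-b,-a)\cup(a,b)$ by choosing $T$ large enough that $\hat\phi$ decays on the band $|\tau|\ge a$ quickly enough to beat the factor $q$. A second-moment expansion of $\|R\|_{L^2(G)}^2$, after opening and applying orthogonality, reduces to counting quadruples $(y_1,z_1,y_2,z_2)$ with matched differences and controlling the resulting Fourier integrals over $G$; this yields $\operatorname{Var}\|R\|_{L^2(G)}^2 = O(q^{-1})$ with constants depending only on $G$ and $\phi$. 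Chebyshev's inequality then selects, for $q$ large enough, a realisation with $\|R\|_{L^2(G)} < \eps$.

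To secure the global bound $|\cP(\tau)| \le M$ on $G$, I would discretise: on $G$ the polynomial $\cP$ has modulus of continuity controlled by the largest frequency $T$, so a Bernstein-type inequality provides an $M$-net $\{\tau_k\}\subset G$ of cardinality polynomial in $qT$ on which $\|\cP\|_{L^\infty(G)}$ is essentially attained. Because each $\cP(\tau_k)$ is a sum of $q$ independent unimodular phases normalised by $\sqrt{q}$, sub-Gaussian tail bounds together with a union bound over the net force $\|\cP\|_{L^\infty(G)} \le M$ on an event of high probability; intersecting with the $L^2(G)$-flatness event produces the required $\cP$.

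The main obstacle will be the simultaneous control of the two competing requirements: $L^2(G)$-flatness forces the frequencies to be widely dispersed (scale $T \gg \sqrt{q}/a$), while the pointwise bound $M$ wants them not to create accidental peaks when $\tau\cdot(\omega(y)-\omega(z))$ resonates for many pairs. The resolution is to tune the density $\phi$ so that $\hat\phi$ not only decays on $G$ but decays smoothly enough to make the net cardinality affordable; alternatively, a deterministic hierarchical construction $q = q_1 q_2 \cdots q_N$ with $\omega(y) = \omega_1(y_1) + \dots + \omega_N(y_N)$ chosen on well-separated scales would factor $\cP$ as a product $\cP_1\cdots\cP_N$ of individually controlled polynomials, allowing the flatness to be iteratively sharpened.
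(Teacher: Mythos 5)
Your reduction to making $R(\tau)=|\cP(\tau)|^2-1$ small in $L^2(G)$ is fine (indeed $\bigl|\,|\cP|-1\,\bigr|\le\bigl|\,|\cP|^2-1\,\bigr|$ pointwise, so the pointwise bound $M$ is not even needed for that step), and the first-moment computation $\mathbb{E}\,R(\tau)=(q-1)|\hat\phi(\tau)|^2$ is correct. But the argument collapses at the second moment, and this is not a technicality --- it is the central obstruction of the whole flatness problem. Write $S(\tau)=\sum_y e^{2\pi i\tau\om(y)}$, so that $R(\tau)=|S(\tau)|^2/q-1$. For i.i.d.\ frequencies and $\tau\in G$ in the region where $\hat\phi(\tau)\approx 0$, the normalized sum $S(\tau)/\sqrt q$ converges in distribution to a standard complex Gaussian; hence $|\cP(\tau)|^2$ converges to an exponential variable of mean $1$ and $\mathbb{E}|R(\tau)|^2\to 1$, not $0$. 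Concretely, in $\mathbb{E}|R(\tau)|^2=q^{-2}\sum_{y\ne z}\sum_{y'\ne z'}\mathbb{E}\,e^{2\pi i\tau(\om(y)-\om(z)-\om(y')+\om(z'))}$ the diagonal $y=y'$, $z=z'$ alone contributes $q(q-1)/q^2\approx 1$. Therefore $\mathbb{E}\|R\|_{L^2(G)}^2\approx|G|$ is bounded away from zero; your claimed bound $\operatorname{Var}\|R\|_{L^2(G)}^2=O(q^{-1})$, even if true, only says that $\|R\|_{L^2(G)}^2$ concentrates near $|G|$, so Chebyshev selects realisations with $\|R\|_{L^2(G)}\approx|G|^{1/2}$, never $<\eps$. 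A random $0$--$1$ polynomial is essentially never flat: $|\cP(\tau)|$ is asymptotically Rayleigh-distributed at each point and $\mathbb{E}\bigl|\,|\cP(\tau)|-1\,\bigr|$ tends to a positive constant. (A secondary issue: the union bound over a net of size polynomial in $qT$ only yields $\|\cP\|_{L^\infty(G)}=O(\sqrt{\log q})$, not a constant $M$ independent of~$q$.)

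This is precisely why the construction cited from \cite{LebesgueFlows} does not average over frequencies but takes the explicit deterministic frequency function $\om(y)=m\frac{q}{\beta^2}e^{\beta y/q}$ of theorem~\ref{thOnFlatPolyRExplicit}: a slowly modulated perturbation of an arithmetic progression, whose flatness on a window $G$ comes from a stationary-phase analysis (the remark following that theorem treats $\om$ as a Hamiltonian whose quadratic term dominates the expansion) and whose admissible degrees $q$ are dictated by the diophantine properties of the vector $(\log 2,\log 3,\dots,\log(K+1))$. The correlations between the phases $e^{2\pi i\tau\om(y)}$ are what must be exploited, not destroyed; independence of the $\om(y)$ is exactly the regime in which flatness fails. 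Your deterministic fallback $\om(y)=\om_1(y_1)+\dots+\om_N(y_N)$, which factors $\cP=\cP_1\cdots\cP_N$, merely pushes the problem down to making each factor flat and has no base case, so it does not repair the gap.
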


It is interesting to see that the flat polynomials in theorem~\ref{thOnFlatPolyR} 
can be represented in an explicit way.

\begin{thm}[see \cite{LebesgueFlows}]
\label{thOnFlatPolyRExplicit}
Let us fix a window $G$ and some precision ${\eps > 0}$. 
There exists ${m > 0}$ and ${\beta_0 > 0}$ such that for any ${\beta \le \beta_0}$, 
${\beta^{-1} \in \Set{N}}$, there exists an infinite sequence ${\bar q = (q_j)_{j=1}^\infty}$ 
of polynomial degrees generating $L^1$-$\eps$-flat polynomials on~$G$ 
\begin{equation}
	\cP_q(\tau) = \frac1{\sqrt{q}} \sum_{y=0}^{q-1} e^{2\pi i\, t\om(y)}
	\quad \text{with} \quad 
	\om(y) = m\frac{q}{\beta^2} e^{\beta y/q}.  
	\qquad 
	q \in \bar q. 
\end{equation}
\end{thm}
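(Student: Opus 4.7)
\smallskip

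\noindent\textbf{Proof plan.} The goal is to estimate $\bigl\|\,|\cP_q(\tau)|-1\,\bigr\|_{L^1(G)}$, and a standard reduction using the identity $|\,|z|-1\,| \le |\,|z|^2-1\,|$ shows it suffices to bound $\bigl\|\,|\cP_q(\tau)|^2-1\,\bigr\|_{L^1(G)}$. Expanding the square, one obtains
\begin{equation}
	|\cP_q(\tau)|^2 = 1 + \frac1q \sum_{y_1 \ne y_2} e^{2\pi i \tau(\om(y_1)-\om(y_2))},
\end{equation}
so it remains to integrate the off-diagonal sum against the indicator of $G$ and show the result is small for a well-chosen subsequence $\bar q$. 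A direct term-by-term bound $\int_G e^{2\pi i \tau \Delta}\,d\tau = O(1/|\Delta|)$ is too crude; one has to exploit the oscillatory cancellation between many pairs simultaneously.

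The analytic heart of the argument is to apply Poisson summation to the highly oscillatory sum $\sum_{y=0}^{q-1} e^{2\pi i \tau \om(y)}$ viewed as $\sum_y f_\tau(y)$ with $f_\tau(y)=e^{2\pi i\tau\om(y)}\chi_{[0,q]}(y)$, and then evaluate each Fourier coefficient $\widehat{f_\tau}(k)=\int_0^q e^{2\pi i(\tau\om(y)-ky)}\,dy$ by stationary phase. Using the explicit expressions ${\om'(y)=(m/\beta)e^{\beta y/q}}$ and ${\om''(y)=(m/q)e^{\beta y/q}}$, the stationary point equation $\tau\om'(y_k)=k$ has a unique solution $y_k\in[0,q]$ exactly when ${k/\tau \in [m/\beta,\,me^{\beta}/\beta]}$, i.e. for about $N(\tau) \asymp |\tau|m$ values of $k$; each such critical mode contributes
\begin{equation}
	\widehat{f_\tau}(k) \approx \sqrt{q/(|\tau|m)}\, e^{i\Phi(\tau,k)}
\end{equation}
with an explicit WKB phase $\Phi(\tau,k)=2\pi(\tau\om(y_k)-ky_k)\pm \pi/4$, while the remaining modes are $O(q^{-1/2})$ by non-stationary phase and endpoint integration by parts. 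Plugging these back into $\cP_q(\tau)$ and taking modulus squared produces, heuristically, a diagonal contribution $N(\tau)\cdot q/(|\tau|m)\cdot 1/q = 1$, matching the constant term.

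To turn this heuristic into a rigorous $L^1(G)$ estimate, I would integrate the cross-term $\sum_{k_1\ne k_2} e^{i(\Phi(\tau,k_1)-\Phi(\tau,k_2))}\cdot q^{-1}\cdot q/(|\tau|m)$ against $d\tau$ on $G$ and control it via van der Corput in the variable~$\tau$, using that $\partial_\tau(\Phi(\tau,k_1)-\Phi(\tau,k_2)) = 2\pi(\om(y_{k_1})-\om(y_{k_2}))$ is bounded below by a positive constant depending on $\beta$, $m$ and $a$ (the inner edge of the window). A uniform bound $|\cP_q(\tau)|\le M$ on $G$ follows from summing the $N(\tau)\asymp |\tau|m$ stationary phase terms of size $\sqrt{1/(|\tau|m)}$ to get $|\cP_q|\le \sqrt{N(\tau)}\le \sqrt{bm}$, which fixes the global constant $M$ in the statement. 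The parameters $m$ and $\beta_0$ are then chosen so the Poisson tail, the stationary-phase remainder, and the cross-term contribution each fall below $\eps/3$, and the subsequence $\bar q=(q_j)$ is selected so that the boundary terms $y_k\in\{0,q\}$ (which produce artefacts in stationary phase) and the rational approximations of $y_k$ by integers do not resonate.

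The main obstacle, as usual in this circle of ideas, is the control of the off-diagonal cross terms: the $N(\tau)^2$ pairs $(k_1,k_2)$ with small phase derivative $|\om(y_{k_1})-\om(y_{k_2})|$ could in principle conspire to spoil the cancellation over a set of positive measure in $G$. Isolating such near-resonant pairs, showing there are only $O(N(\tau))$ of them (so that their joint contribution stays $O(1)$ and then vanishes in $L^1(G)$ as $q\to\infty$ along $\bar q$), and verifying that the remaining $k_1\ne k_2$ pairs produce integrable $1/|\om(y_{k_1})-\om(y_{k_2})|$-type terms, is the technically delicate step and the place where the arithmetic of the subsequence $\bar q$ enters.
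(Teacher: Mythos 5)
Your opening moves (the reduction $\bigl|\,|z|-1\,\bigr|\le\bigl|\,|z|^2-1\,\bigr|$, Poisson summation, stationary phase at $\tau\om'(y_k)=k$, and the diagonal bookkeeping $\sum_k 1/(k\beta)\approx 1$) do match the skeleton of the argument in the cited source, which the present paper only summarizes in the remark following the theorem (the WKB/Hamiltonian interpretation of $\om$). The genuine gap is in the step you yourself identify as the heart of the matter: the off-diagonal terms. Your proposed mechanism --- van der Corput in the variable $\tau$, using that $\partial_\tau(\Phi(\tau,k_1)-\Phi(\tau,k_2))=2\pi(\om(y_{k_1})-\om(y_{k_2}))$ is large --- controls only the \emph{signed} integral $\int_G(|\cP_q(\tau)|^2-1)\,d\tau$, not $\int_G\bigl|\,|\cP_q(\tau)|^2-1\,\bigr|\,d\tau$. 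The former tends to zero for essentially \emph{any} choice of frequencies (it is little more than the $L^2$ normalization of $\cP_q$) and carries no flatness information. To bound the $L^1$ norm you need the off-diagonal sum to be small for \emph{most individual} $\tau\in G$, which amounts to a fourth-moment estimate $\int_G|\cP_q|^4\to|G|$; for generic phases the Gaussian heuristic gives $\int_G|\cP_q|^4\approx 2|G|$, so $\bigl\|\,|\cP_q|^2-1\,\bigr\|_{L^1(G)}$ stays of order $|G|$ no matter how you average in $\tau$. Your claim that the near-resonant pairs contribute $O(1)$ "and then vanish in $L^1(G)$ as $q\to\infty$" is exactly the point that fails: an $O(1)$ off-diagonal contribution is what must be excluded, and averaging alone cannot exclude it.

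The missing idea is that flatness here is a constructive-interference (Gauss-sum-type) phenomenon, not a cancellation-on-average phenomenon. Writing the WKB phase explicitly, $\Phi(\tau,k)\approx \frac{2\pi qk}{\beta}\bigl(1-\log\frac{k\beta}{\tau m}\bigr)$, the term $\frac{qk}{\beta}\log k$ is what forces $\bigl|\sum_k a_k e^{i\Phi(\tau,k)}\bigr|^2\approx\sum_k a_k^2$, and this happens only when $q$ is chosen so that the fractional parts of $\frac{q}{\beta}\log k$, for $k$ up to roughly $K$, are simultaneously controlled --- this is precisely the return-time condition for the linear flow in direction $(\log 2,\dots,\log(K+1))$ on $\Set{T}^K$ that the paper's remark describes. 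In your write-up the sparse subsequence $\bar q$ appears only as a device to dodge endpoint artefacts and accidental resonances; in the actual argument it is where the entire flatness mechanism lives, and without it the statement is false for generic $q$. (A smaller issue: your uniform bound $|\cP_q|\le\sqrt{N(\tau)}\cdot(|\tau|m)^{-1/2}$ yields a constant depending on the window, whereas the global constant $M$ of theorem~\ref{thOnFlatPolyR} again requires the summation-by-parts over the correlated phases, not the triangle inequality.)
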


\begin{rem}
The only feature which is hard to control when choosing parameters $m$, $\beta$ and $q$ 
in theorem~\ref{thOnFlatPolyRExplicit} is the sequence $q_j$. Indeed, 
the sequence $q_j$ is the rarer the smaller $\eps$ and the more spacious window $G$ we~take. 
Here we use the term {\it spaciousness\/}%
\footnote{
	This value is connected to the length of the window in the
	logarithmic scale: ${\log b - \log a}$.
} 
of the window $G$ for the fraction~$b/a$. 
Suprisingly the value of the \uwave{$\,$smallest}\relax\ proper~$q$ is strongly related to the 
{\em diophantine properties\/} of the vector 
\begin{equation}
	{\boldsymbol v} = (\log 2,\log 3,\log 4,\dots,\log(K+1)). 
\end{equation}
More exactly, we consider the line parallel to ${\boldsymbol v}$ in the torus $\Set{T}^K$ 
starting from $0$ and study its first return time to some $\eps$-neighbourhood of the zero point. 
And the value of this return time is connected with the complexity (in particulr, the degree) 
of the polynomial $\cP(\tau)$. To explain  this phenomenon  we should mention that 
the frequency function $\om(y)$ in the proof of theorem~\ref{thOnFlatPolyRExplicit} 
is considered a~Hamiltonian of a~free quantum particle moving on the torus~$\Set{T}$, 
and the small parameter $\beta q^{-1}$ measures deviation according to the classical quadratic Hamiltonian 
($y$~is~the momentum of the particle), 
\begin{equation}
	\om(y) = {\color{green} \frac{q}{\beta^2} + \frac{y}{\beta}} + {\color{blue} \frac{y^2}{2q}} + 
		{\color{BurntOrange} \beta\frac{y^3}{6q^2} +\dots, }
\end{equation}
where $m = 1$, for example. Actually, that is exactly $\beta^{-1}q$, the value playing role of 
a~time in the dynamical system ${\dot Y = {\boldsymbol v}(Y)}$ on the torus $\Set{T}^K$ 
participating in the construction of flat polynomials with the frequency function 
${\om = \frac{q}{\beta^2}e^{\beta y/q}}$. 
\end{rem}

The following theorem generalizing lemma~\ref{thOnFlatPolyRExplicit} is of special status 
concerning the content of this paper. It cannnot used to improve the investigation of Riesz products 
on~$\Set{R}$, but it is applied in the case of rank one $\Set{R}^d$-actions 
(see the proof of theorem~\ref{thmSimpleLebSpRd} and lemma~\ref{lemLOneEstNearZero}). 

\begin{thm}\label{thmLOneCompactFlatness}
For any compact set $K$ in $\Set{R}$ and ${\eps > 0}$ there exists a polinomial 
in the class~$\cM^{\Set{R}}$ which is $\eps$-flat in $L^1(K)$. 
\end{thm}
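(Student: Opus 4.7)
The plan is to split the compact set $K \subset [-R,R]$ into a window
$G = (-R-1,-a) \cup (a,R+1)$ and a neighbourhood $[-a,a]$ of the origin.
On $G$, an application of theorem~\ref{thOnFlatPolyRExplicit} with precision $\eps/2$
yields an explicit polynomial $\cP_q \in \cM^{\Set{R}}$ with
$\bigl\|\,|\cP_q|-1\,\bigr\|_{L^1(G)} < \eps/2$.
The essential difficulty is to show that, simultaneously, the $L^1$-contribution of
$|\cP_q|-1$ on $[-a,a]$ can also be made smaller than~$\eps/2$,
despite the obstruction $|\cP_q(0)|=\sqrt{q}$ being arbitrarily large as $q \to \infty$.

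For the near-zero estimate I would invoke the large sieve / Ingham inequality:
for any $\delta$-separated real frequencies $\om(0)<\dots<\om(q-1)$ and any $a>0$,
\begin{equation}
	\int_{-a}^{a}\biggl| \frac{1}{\sqrt q}\sum_{y=0}^{q-1} e^{2\pi i\tau\om(y)}\biggr|^{2} d\tau
	\le C_{\mathrm{LS}}\left(2a + \frac{1}{\delta}\right),
\end{equation}
with a universal constant $C_{\mathrm{LS}}$. For the explicit frequencies
$\om(y)=(mq/\beta^{2})e^{\beta y/q}$ from theorem~\ref{thOnFlatPolyRExplicit},
convexity forces the minimum gap to be attained at $y=0$, giving
$\delta = (mq/\beta^{2})(e^{\beta/q}-1) \ge m/\beta$; crucially, this bound depends
only on $m/\beta$ and \emph{not} on $q$. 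Combined with Cauchy--Schwarz we get
$\int_{-a}^{a}|\cP_q|\,d\tau \le \sqrt{2a\cdot C_{\mathrm{LS}}(2a+\beta/m)}$,
and in the regime $\beta/m\le a$ the absolute-constant bound
$\int_{-a}^{a}\bigl|\,|\cP_q|-1\,\bigr|\,d\tau\le C_0\,a$ follows at once.

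To assemble the result, let $R$ satisfy $K\subset[-R,R]$, choose first $a>0$ so small that
$C_0\,a<\eps/2$, and apply theorem~\ref{thOnFlatPolyRExplicit} to the window
$G=(-R-1,-a)\cup(a,R+1)$ with precision~$\eps/2$; this produces parameters $m$, $\beta_{0}$
and an infinite sequence~$\bar q$ of degrees. Finally select
$\beta\le\beta_0$ with $\beta^{-1}\in\Set{N}$ small enough that $\beta/m\le a$
(always possible since arbitrarily large integers~$\beta^{-1}$ are admissible),
and pick any $q\in\bar q$. The resulting polynomial $\cP_q$ is $(\eps/2)$-flat
on $G$ by construction and contributes at most $\eps/2$ in $L^1([-a,a])$ by the estimate
above, whence $\bigl\|\,|\cP_q|-1\,\bigr\|_{L^1(K)}<\eps$.

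The essential technical obstacle is ensuring the near-zero bound is uniform in~$q$.
This is arranged by the two key features already noted: the minimal frequency gap
is controlled by $m/\beta$ alone, so the large sieve bound is $q$-independent;
and the free small parameter~$\beta$ of theorem~\ref{thOnFlatPolyRExplicit} can always be
pushed down to satisfy $\beta/m\le a$ \emph{after} the window---and therefore
$m$ and~$\beta_0$---have been fixed, avoiding any circular dependence between
the choices of $a$, $m$, and~$\beta$.
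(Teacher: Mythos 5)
Your argument is correct, and you should be aware that the paper itself never writes out a proof of theorem~\ref{thmLOneCompactFlatness}: the only supporting material in the text is lemma~\ref{lemLOneEstNearZero}, which supplies a near-zero estimate $\|\cP\|_{L^1(-a,a)} = O(\log(aq)/\sqrt{q})$ and is evidently meant to be combined with theorem~\ref{thOnFlatPolyRExplicit} via exactly the decomposition you use, namely $K \subseteq [-a,a] \cup G$ with $G$ a window. Where you genuinely diverge is in the tool for the neighbourhood of the origin. The paper's bound is an oscillatory-sum (chirp) estimate that decays as $q \to \infty$ for fixed $a$; your large-sieve bound $\int_{-a}^{a}|\cP_q|^2\,d\tau \le C_{\mathrm{LS}}(2a+\beta/m)$ uses only the $q$-independent minimal frequency gap --- your observation that convexity of $\om$ places that gap at $y=0$ with value at least $m/\beta$ is correct --- and after Cauchy--Schwarz yields $\int_{-a}^{a}\bigl|\,|\cP_q|-1\,\bigr|\,d\tau \le C_0\,a$ uniformly in $q$. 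For the theorem as stated this is entirely sufficient and arguably cleaner: since $C_0$ is universal, the ordering of choices ($a$ first, then the window and hence $m$, $\beta_0$, then $\beta$ with $\beta/m \le a$) closes without circularity, as you point out. What your softer estimate does not deliver is the decay in $q$ built into lemma~\ref{lemLOneEstNearZero}, which the paper needs later to get the summability $\sum_{n} \|\cP_n|_{U_0 \cap \cF_n}\|_1 < \infty$ in the proof of theorem~\ref{thmSimpleLebSpRd}; so your route proves the present theorem but could not replace that lemma in the $\Set{R}^d$ application. Within the scope of theorem~\ref{thmLOneCompactFlatness} itself, I see no gap.
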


Observe that the statement of this lemma is false in~$L^2$, and concerning $L^1$-flatness 
it is not known can we find a polynomial which is {\em globally\/} $L^1$-flat on~$\Set{R}\:$?

\subsection[Riesz products]{Generalized Riesz products and spectral measures of rank one flows}

In view of the forthcoming discussion of rank one $\Set{R}^d$-actions 
we consider in detains the proof of theorem~\ref{thSimpleLebesgueSpR} in the one-dimensional case. 
The concept of {\em generalized Riesz product\/} in the scope of rank one dynamical systems 
goes back to paper~\cite{Bourgain} by J.\,Bourgain. 
In this paper the measure of maximal spectral type $\sigma$ is claculated for 
the mixing rank one constructions introduced by D.\,Ornstein~\cite{O}. 
The measure $\sigma$ is represented in a~form of Riesz product (converging in weak sense) 
\begin{equation}\label{eRieszProduct}
	\sigma = \prod_{n=1}^\infty |P_n(z)|^2, \qquad 
	P_n(z) = \frac1{\sqrt{q_n}} \sum_{k=0}^{q_n-1} z^{\om_n(k)} 
\end{equation}
and it is discovered that this measure in purely singular with probability~$1$ 
(see also \cite{elAbdal1,elAbdal3,AbPaPr}). 

It~is important to mention the deep connection of this approach with the classical problem 
on investigation of purely singular Rajchman measures on the unit segment $[0,1]$, 
in particular, the famous question due to Rafa\"el Salem on the Minkowskii question mark function 
and his work on strictly increasing singular functions on $[0,1]$ with fast correlation decay 
(see \cite{SalemOnStrMonSingF,SalemOnSetsUniqueness} and~\cite{EliseThierryValery}). 
In order to illustrate this connection let us mention that in the most cases the spectral measures 
of rank one dynamical systems are investigated using different variations around 
the Riesz product technique, though, it is not known exactly how fast the Fourier coefficients 
\begin{equation}
	c_n = \int_0^1 e^{2\pi i\, nx} \,d\sigma(x) 
\end{equation}
can decay for the spectral measures $\sigma$ of rank one dynamical systems? At the same time, for a class 
of local rank one ergodic transformations (see~\cite{IcePaperI}) one can observe 
the extremal rate of power decay 
\begin{equation}
	c_n = O(n^{-1/2+\eps}) 
\end{equation}
for all ${\eps > 0}$, similar to R.\,Salem's examples of purely singular measures on~$[0,1]$. 
Notice that faster power decay (with some additional requirements) would ensure 
the absolute continuity of the spectral measure~$\sigma$. 
Nevertheless, no deducion can be made from such kind of information about the decay of~$c_n$, 
whether it has a Lebesgue component or not?

\begin{conj}
Suppose that a sequence of tower partitions $\xi_n$ is fixed for a rank one transformation 
such that $\xi_{n+1}$ refines $\xi_n$ and any measurable set $A$ can be approximated by a sequence 
of $\xi_n$-measurable sets~$A_n$. Then for {\it any\/}\footnote{Non-zero and with zero mean.} 
$\xi_{n_0}$-measurable function $f$ 
the sequence of the Fourier coefficients $c_n$ for the spectral measure $\sigma_f$ satisfies
\begin{equation}
	\limsup_{n \to \infty} \frac{\log|c_n|}{n} \ge -\frac12. 
\end{equation}
\end{conj}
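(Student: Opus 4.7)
The approach I would take is to exploit the approximate periodicity inherent in the cutting-and-stacking construction. For each stage $m \geq n_0$ one naturally obtains a return time $\tau_m$, of order $h_m$, at which the autocorrelation
\[
	c_{\tau_m} = \langle \hat T^{\tau_m} f, f \rangle = \int_X f(T^{\tau_m} x)\,\overline{f(x)}\,d\mu(x)
\]
should be bounded below by a positive constant independent of $m$. Since $\tau_m \to \infty$, this would yield the much stronger conclusion $\limsup_n \log|c_n|/n \geq 0$, from which the stated bound $\geq -1/2$ follows a fortiori.

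Concretely: the stage-$(m+1)$ tower contains $q_m$ vertical copies of the stage-$m$ tower at positions $\omega_m(0) < \cdots < \omega_m(q_m-1)$, and the shift $T^{h_m + s_{m,j}}$ sends the $j$-th copy exactly onto the $(j+1)$-th. I would choose $\tau_m$ to be the most frequent (or median) value among $\{h_m + s_{m,j}\}_j$, so that $T^{\tau_m}$ identifies a positive fraction of neighbouring copies up to a controlled sliding error. Since $f$ is $\xi_{n_0}$-measurable, its value depends only on the level index within the stage-$n_0$ tower, and this index is preserved by any exact copy-to-copy shift at stage $m > n_0$. Partitioning $X$ according to which copy of the stage-$m$ tower each point sits in, and integrating, should then yield
\[
	|c_{\tau_m}| \geq \bigl(1 - 1/q_m - \eta_m\bigr)\|f\|_2^2,
\]
where $\eta_m$ bounds the measure of the points on which the shift is mismatched or exits the tower.

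The main obstacle is controlling $\eta_m$ when the spacers $(s_{m,j})_j$ are wildly irregular: a single choice of $\tau_m$ might then fit only a vanishing fraction of indices~$j$, destroying the uniform lower bound. A cleaner alternative, matching the Riesz-product description \eqref{eRieszProduct} of $\sigma_f$, is to evaluate $\hat\sigma_f$ at a single \emph{resonant} frequency $n_m = \omega_m(j_1) - \omega_m(j_2)$ appearing in the support of $|P_m|^2$: isolating the contribution of the $m$-th factor should give $|c_{n_m}| \geq C\, q_m^{-1}$, and under the correctness condition \eqref{eRankOneCorrectness}, together with $n_m \leq h_{m+1}$, the resulting inequality $\log(C\,q_m^{-1})/n_m \geq -1/2$ holds with enormous margin because $\log q_m$ grows much slower than any linear function of $h_{m+1}$. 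The conservative constant $-1/2$ is then a worst-case accounting of these two competing rates, and the technical heart of the proof is making the bound $|c_{n_m}| \geq C\, q_m^{-1}$ rigorous by controlling the interaction of the $m$-th Riesz factor with the infinite product of the remaining ones, where the zero-mean assumption on $f$ ensures that $\sigma_f$ is genuinely supported away from the trivial eigenvalue and that the resonant contribution is not cancelled by a boundary atom.
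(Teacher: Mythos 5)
The first thing to note is that the paper contains no proof of this statement at all: it is presented explicitly as a conjecture, i.e.\ as an open problem, immediately after the discussion of how little is known about lower bounds for the Fourier coefficients of generalized Riesz products. So there is no argument of the paper's to compare yours against; the only question is whether your proposal stands on its own. It does not, for two concrete reasons.

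First, the partial-rigidity route fails exactly where you suspect it might. For Ornstein-type mixing rank one constructions the spacers $s_{m,j}$ are spread over a range comparable to $h_m$, so no single choice of $\tau_m$ matches more than $O(1)$ of the $q_m$ copies of the stage-$m$ tower; your error term $\eta_m$ is then $1-o(1)$ rather than $o(1)$. Indeed mixing forces $c_n \to 0$ along \emph{every} subsequence, so no constant lower bound $|c_{\tau_m}| \ge \delta\|f\|_2^2$ can hold in general, and the conjecture cannot be reduced to partial rigidity. Second, the fallback bound $|c_{n_m}| \ge C q_m^{-1}$ at a resonant frequency is asserted, not proved, and it is precisely the open content of the conjecture. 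The coefficient $c_{n_m} = \hat\sigma_f(n_m)$ is not ``the contribution of the $m$-th factor'': in the Riesz-product representation \eqref{eRieszProduct} it is a sum over all ways of writing $n_m$ as a combination of frequency differences drawn from \emph{all} stages together with the initial density $|\hat f_{(n_0)}|^2$, and the off-resonant terms can cancel the resonant one. (The zero-mean hypothesis only removes the atom at the trivial eigenvalue; it gives no control over this cancellation.) No technique for bounding an individual Fourier coefficient of such an infinite product from below is supplied, and without it the argument assumes the quantitative non-degeneracy it is meant to establish. The closing numerology ($\log q_m$ versus $n_m$) is also not automatic if you take $n_m$ to be a smallest gap $\approx h_m$, since nothing in the rank one construction forbids $q_m \gg e^{h_m}$; you would need to work with the largest resonances $n_m \approx h_{m+1} \approx q_m h_m$, but that only sharpens the dependence on the unproved lower bound.
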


Enclosing the discussion around analytical properties of the spectral measures 
generated by rank one dynamical systems let us remark that if we like 
construct a Lebesgue component in the spectrum of some rank one system, 
the only approach discussed in the literature (both for transformatios and flows) 
is the use of Littlewood-type flat polynomials. THough, hypothetically 
it could happen that $P_n(z)$ are not flat, but the Riesz product \eqref{eRieszProduct} 
converges to a~measure with an absolutely continuous component or even Lebesgue measure. 
In this connection we should mention that the following question 
concerning the spectral type of rank one transformations is still open. 

\begin{quest}\label{questRankOneSingSp}
For any rank one transformation the measure of maximal spectral type $\sigma$ 
is singular with respect to the Lebesgue measure? 
\end{quest}

\begin{quest}[S.\,Banach]\label{BanachProblem}
Is it possible to find an invertible measure preserving transformation 
with the Lebesgue spectrum of multiplicity one? 
\end{quest}

Therefore a rank one transformation is a candidate to the positive answer 
to the well-known question~\ref{BanachProblem} due to Stephan Banach 
(see \cite{UlamBook}, \cite{Kir67}, \cite{LemEncycloSpTh} and~\cite{LebesgueFlows}), 
there are no obstacles both for question~\ref{questRankOneSingSp} and for 
question on existence of Littlewood type flat polynomials in the class $\cM$ to be false. 
At~the same time, both questions, the flatness in the class $\cM^{\Set{R}}$ 
and Banach question, have positive answer for the group~$\Set{R}$, and 
our purpose in this paper is to discuss simple extensions of this phenomenon to 
the larger class of group actions including a class of rank one $\Set{R}^d$-actions.

\medskip
{\it Proof of theorem~\ref{thSimpleLebesgueSpR}.}
Let us consider a function ${f \Maps X \to \Set{C}}$, ${f \in L^2(X,\mu)}$, which is measurable 
with respect to $\sigma$-algebra $\cB_n$. Such function can be represented in the form
\begin{equation}
	f(\x) = f_{(n_0)}(x_{n_0}), 
\end{equation}
where $x_{n_0}$ is the $n_0$-th coordinate of a point ${\x \in X}$. 
The function $f_{n_0}$ as well as any measurable function on~$I_n$ can be lifted 
to the upper levels in accordance with the relation 
\begin{equation}
	f_{(n+1)}(x_{n+1}) = f_{(n)}(\phi_n(x_{n+1})). 
\end{equation}
Here and in the sequel we consider the functions $f_{(n)}(x_n)$ 
like function on the real line~$\Set{R}$ 
letting $f_{(n)}$ be~zero outside ${I_n = [0,h_n]}$. 
Let us define the following correlation functions:
\begin{gather}
	R(t) = \scpr<T^tf,f>, \\
	R_n(t) = (1-\gamma_n) \frac1{h_n} \int_0^{h_n} f_{(n)}(t+x_n)\,\OL{f_{(n)}(x_n)} \,dx_n, \quad n \ge n_0, 
\end{gather}
and recall that the equivariant measure $\mu_n$ on the $n$-th level of the construction 
of the rank one flow equals ${(1-\gamma_n)\la_n + \gamma_n\df_0}$ and 
${1-\gamma_n}$ is the total measure of the $n$-th tower in the cutting-and-stacking construction, 
\begin{equation}
	1-\gamma-n = \mu(U_n), \quad \text{where} \quad U_n = \{\x \in X \where 0 < x_n < h_n\}. 
\end{equation}

Without loss of generality for our purposes it suffies to explore functions 
satisfying the following requirements:
\begin{itemize}
	\item ${|f(\x)| \le M_0}$; 
	\item ${\|f\| = 1}$ in $L^2(X,\mu)$; 
	\item ${f(\x) = 0}$ outside the tower $U_n$. 
\end{itemize}
From this point we assume that these conditions are satisfied. 
Notice that ${R(0) = \|f\|^2 = 1}$ and we can also check that ${R_n(0) = 1}$. 

\begin{lem}
$R_n(0) = 1$ for any $n \ge n_0$. 
\end{lem}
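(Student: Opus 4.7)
The plan is to compute $R_n(0)$ directly from the definition and compare it to the normalization $\|f\|^2 = 1$, using the explicit decomposition of the equivariant measure $\mu_n$ on~$I_n$.

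Setting $t = 0$ in the definition of $R_n$ gives
\[
  R_n(0) \;=\; (1-\gamma_n)\,\frac{1}{h_n}\int_0^{h_n}|f_{(n)}(x_n)|^2\,dx_n,
\]
which is exactly $(1-\gamma_n)$ times the absolutely continuous part of $\int_{I_n}|f_{(n)}|^2\,d\mu_n$ relative to the decomposition ${\mu_n = (1-\gamma_n)\la_n + \gamma_n\delta_0}$.

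On the other hand, $f$ is $\cB_n$-measurable, so the pushforward of $\mu$ under the coordinate projection $\x \mapsto x_n$ is $\mu_n$, and therefore
\[
  1 \;=\; \|f\|^2 \;=\; \int_{I_n}|f_{(n)}|^2\,d\mu_n \;=\; (1-\gamma_n)\,h_n^{-1}\!\int_0^{h_n}\!|f_{(n)}|^2\,dx_n \,+\, \gamma_n|f_{(n)}(0)|^2.
\]
Comparing with the previous display reduces the lemma to verifying that the atomic contribution vanishes, i.e.\ that $f_{(n)}(0) = 0$.

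For this last point, the lifting rule $f_{(n+1)} = f_{(n)} \circ \phi_n$, applied iteratively, gives $f_{(n)}(0) = f_{(n_0)}(\phi_{n_0}\circ\cdots\circ\phi_{n-1}(0))$. From the defining recipe of $\phi_k$ --- the identity shift on the columns $(\om_k(j),\om_k(j)+h_k)$ and identically $0$ on the complementary spacer region --- one checks that $\phi_k(0) = 0$ (either the point $0$ sits at the left end of the first column, or it lies in the spacer set). Iterating, the argument of $f_{(n_0)}$ collapses to $0$. Finally, $f_{(n_0)}(0) = 0$ because by hypothesis $f$ vanishes outside the tower $U_{n_0}$, and under the coordinate pushforward the atom of $\mu_{n_0}$ at $0$ corresponds precisely to $X \setminus U_{n_0}$.

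The only delicate point is the bookkeeping check that $\phi_k$ preserves the distinguished endpoint $0$, which is a short combinatorial verification rather than an analytic obstacle; everything else is an immediate consequence of the orthogonal decomposition of $\mu_n$ and the normalization $\|f\| = 1$.
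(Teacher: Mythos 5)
Your proof is correct and follows essentially the same route as the paper's: both reduce $R_n(0)$ to $\|f\|^2 = 1$ via the identity $\int_X \psi\,d\mu = \frac{1-\gamma_n}{h_n}\int_0^{h_n}\psi(x)\,dx$ for $\cB_n$-measurable functions vanishing off the tower $U_n$. You merely make explicit the point the paper leaves implicit --- that the atom $\gamma_n\delta_0$ of $\mu_n$ contributes nothing because $\phi_k(0)=0$ forces $f_{(n)}(0)=f_{(n_0)}(0)=0$ --- which is a harmless and arguably welcome elaboration.
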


\begin{proof}
The lemma follows from the observation that $f_{(n+1)}$ ``sits inside'' any tower $U_n$ 
with ${n \ge n_0}$, whenever ${\supp f_{(n_0)} \subseteq U_{n_0}}$ (up to zero measure set) 
for some starting index~$n_0$. Thus, for any $\cB_n$-measurable function 
${\psi(\x) \in L^1(X,\mu)}$, zero on~${X \sms U_n}$, which is associated with 
a function $\psi_{(n)}$ on~$[0,h_n]$ we have
\begin{equation}
	\int_X \psi(\x) \,d\mu = \frac{1-\gamma_n}{h_n} \int_0^{h_n} \psi(x) \,dx. 
\end{equation}
and the statement follows from the identity ${R_n(0) = \|\psi_{(n)}\|^2}$, 
since $R_n(t)$ is the correlation function for the shift action
\begin{equation}
	S^t \Maps \Set{R} \to \Set{R} \Maps x \mapsto t + x, 
\end{equation}
where $\|\psi_{(n)}\|$ is the norm in $L^2(\Set{R},\,(1-\gamma_n)h_n^{-1}dx)$. 
\end{proof}

Since the sequence towers $U_n$ approximate the $\sigma$-algebra of our phase space $(X,\mu)$ 
and $U_n$ covers most part of~$X$, i.e.\ ${\mu(U_n) \to 1}$, 
the functions $R_n(t)$ asymptotically close to~$R(t)$. 

\begin{lem}
If $0 < |t| < h_n$ then ${|R_n(t) - R(t)| \le \gamma_n + |t|/h_n}$. 
\end{lem}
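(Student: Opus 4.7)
\smallskip
\emph{Proof plan.}
The natural approach is to exploit the two hypotheses on $f$---$\cB_n$-measurability and support in the tower $U_n$---so that $R(t) = \int_X f(T^t\x)\OL{f(\x)}\,d\mu$ collapses onto a shift correlation of $f_{(n)}$ on $[0,h_n]$, modulo a thin boundary error and the mass defect $\gamma_n$. First I would use $\supp f \subseteq U_n$ to reduce the integral to $\x \in U_n$, and then isolate the ``good set''
\[
V = \{\x \in U_n \where x_n + t \in (0,h_n)\}.
\]
The geometric identity $(T^t\x)_n = x_n + t$ for $\x \in V$ is the heart of the argument: for a.e.\ such $\x$ the shift $(T^t\x)_m = x_m + t$ holds at some sufficiently large level $m$, and the hypothesis $x_n + t \in (0,h_n)$, propagated via the tower relations $x_{k+1} = x_k + \om_k(j_k)$, shows that the translate $x_{k+1} + t$ stays in the same level-$k$ subinterval $(\om_k(j_k), \om_k(j_k) + h_k)$ as $x_{k+1}$ for every $k \ge n$. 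Iterating $\phi_k$ from level $m$ down to level $n$ then returns $x_n + t$.

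Granting this, on $V$ the integrand becomes the $\cB_n$-measurable function $f_{(n)}(x_n + t)\OL{f_{(n)}(x_n)}$. Since the pushforward of $\mu|_{U_n}$ to the $x_n$-axis has density $(1-\gamma_n)/h_n$ on $(0,h_n)$,
\[
\int_V f_{(n)}(x_n + t)\OL{f_{(n)}(x_n)}\,d\mu = \frac{1-\gamma_n}{h_n}\int_0^{h_n} f_{(n)}(x_n + t)\OL{f_{(n)}(x_n)}\,dx_n = R_n(t),
\]
the portion of the $R_n$-integral outside the good range of $x_n$ contributing nothing once $f_{(n)}$ is extended by zero off $(0, h_n)$. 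Consequently
\[
R(t) - R_n(t) = \int_{U_n \sms V} f(T^t\x)\OL{f(\x)}\,d\mu.
\]

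To finish, the bad set $U_n \sms V$ corresponds to $x_n$ in an interval of length $|t|$, hence $\mu(U_n \sms V) \le (1-\gamma_n)|t|/h_n$, and since the integrand is uniformly bounded by $M_0^2$ this supplies the $|t|/h_n$ summand of the claimed bound; the residual $\gamma_n$ summand absorbs the $(1-\gamma_n)$-mismatch between $\mu|_{U_n}$ and normalized Lebesgue on $(0,h_n)$ that appears in any crude estimation of the boundary integral, together with the possibility that a portion of $f\circ T^t$ escapes the level-$n$ tower near the bad set. The only step requiring genuine care is the coordinate identity $(T^t\x)_n = x_n + t$ on $V$; once that is in hand the remainder is routine Fubini-style bookkeeping.
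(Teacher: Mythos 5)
Your proof is correct and follows essentially the same route as the paper's: split $R(t)$ over the good set where the coordinate identity $(T^t\x)_n = x_n + t$ holds (there the integral reproduces $R_n(t)$, since $f_{(n)}$ extended by zero makes the truncated and full integrals agree) and bound the complement by its measure, which is at most $\gamma_n + |t|/h_n$. The one caveat is that bounding the integrand by $M_0^2$ literally gives $M_0^2\,|t|/h_n$ rather than $|t|/h_n$; the paper silently invokes ``$|f|\le 1$'' at this step, so your argument needs the same normalization $M_0 \le 1$ to land exactly on the stated constant.
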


\begin{proof}
Indeed, integrating the product $f_{(n)}(t+x_n)\,\OL{f_{(n)}(x_n)}$ in $R(t)$ 
we cannot control the influence of the set of measure $\gamma_n$ outside~$U_n$ 
and the part of the tower $U_n$ not covered by the levels that fit into the overlapping 
${I_n \cap (I_n+t)}$. Suppose that ${t > 0}$, then 
\begin{multline}
	R(t) = \int_{U_n} T^t f(\x)\,\OL{f(\x)} \,d\mu(\x) + \int_{X \sms U_n} T^tf(\x)\,\OL{f(\x)} \,d\mu(\x) = \\ 
	= \frac{1-\gamma_n}{h_n} \int_0^{h_n-t} f_{(n)}(t+x_n)\,\OL{f_{(n)}(x_n)} \,dx_n +
		\int_{U_n|_{(h_n-t,h_n)} \:\cup\: (X \sms U_n)} T^t f(\x)\,\OL{f(\x)} \,d\mu(\x), 
\end{multline}
where 
\begin{equation}
	U_n|_J \stackrel{def}{=} \{\x \in X \where x_n \in J\}, 
\end{equation}
and
\begin{equation}
	R_n(t) = \frac{1-\gamma_n}{h_n} \int_0^{h_n-t} f_{(n)}(t+x_n)\,\OL{f_{(n)}(x_n)} \,dx_n, 
\end{equation}
hence, taking into account the requirement ${|f| \le 1}$, 
\begin{equation}
	|R_n(t) - R(t)| \le \mu\bigl(U_n|_{(h_n-t,h_n)} \:\cup\: (X \sms U_n)\bigr) \le t/h_n + \gamma_n. 
\end{equation}
The case ${t < 0}$ is symmetrical. 
\end{proof}

As a direct corollary we get the following statement. 

\begin{lem}
For a fixed $\cB_{n_0}$-measurable function $f(\x)$ satisfying the conditions stated above 
then the correlation functions $R_n(t)$ converges pointwise to $R(t)$, i.e.\ 
\begin{equation}
	\forall t \in \Set{R} \quad R_n(t) \to R(t). 
\end{equation}
\end{lem}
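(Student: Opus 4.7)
The plan is to derive the statement as an immediate consequence of the preceding lemma, which gives the estimate $|R_n(t)-R(t)| \le \gamma_n + |t|/h_n$ whenever $0 < |t| < h_n$. For any fixed $t \in \Set{R}$, I would argue that both error terms on the right vanish in the limit $n \to \infty$.

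First I would dispose of the trivial case $t=0$, for which $R(0) = \|f\|^2 = 1 = R_n(0)$ by the very first lemma. For $t \ne 0$, I would observe that the definition of the flow $T^t$ on $X$ requires the inclusion $|t| < x_n < h_n - |t|$ to hold from some index onward for a.e.\ point, which forces $h_n \to \infty$ (otherwise the action could not be defined for large $|t|$). Hence there exists $N \ge n_0$ such that $|t| < h_n$ for all $n \ge N$, placing us in the regime in which the previous lemma applies. Substituting this into the bound gives
\begin{equation}
    |R_n(t) - R(t)| \le \gamma_n + \frac{|t|}{h_n} \longrightarrow 0,
\end{equation}
since $\gamma_n \to 0$ is built into the construction of the equivariant measures $\mu_n = (1-\gamma_n)\la_n + \gamma_n\delta_0$ (as stated after condition~\eqref{eRankOneCorrectness}), while $|t|/h_n \to 0$ because $t$ is held fixed.

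Since the statement is announced as a direct corollary of the quantitative estimate already established, there is no substantive obstacle to the proof. The only verification required beyond a straight substitution is the unboundedness $h_n \to \infty$, which is transparent from the construction of the flow action.
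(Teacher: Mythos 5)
Your proof is correct and follows exactly the route the paper intends: the lemma is stated there as ``a direct corollary'' of the preceding quantitative bound $|R_n(t)-R(t)| \le \gamma_n + |t|/h_n$, and you simply pass to the limit using $\gamma_n \to 0$ and $h_n \to \infty$ (the latter being guaranteed by the construction, e.g.\ since $q_n \ge 2$ disjoint copies of $I_n$ must fit inside $I_{n+1}$). The extra care you take with the $t=0$ case and with justifying $h_n \to \infty$ only makes explicit what the paper leaves implicit.
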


The next lemma is the common property of measurable $\Set{R}$-actions on a Lebesgue space 
(see~\cite{KSF}). 

\begin{lem}
Any measurable flow $T^t$ is continuous, i.e.\ ${R_f(t) \to 0}$ as ${t \to 0}$ 
for any ${f \in L^2(X,\mu)}$, where ${R_f(t) = \scpr<T^tf,f>}$. 
\end{lem}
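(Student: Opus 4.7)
The statement should be read as ${R_f(t) \to R_f(0) = \|f\|^2}$ as ${t\to 0}$, equivalently ${\|\hat T^tf - f\|\to 0}$, i.e.\ strong continuity of the unitary group $\hat T^t$ at the identity. The plan is to deduce this from joint measurability of the action together with separability of $L^2(X,\mu)$, via an approximate-identity / Bochner-integral construction.

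First I would unpack the hypothesis: a measurable flow means that ${(t,\x)\mapsto T^t\x}$ is jointly measurable from ${\Set{R}\times X}$ to $X$. Composing with any ${f\in L^2(X,\mu)}$ and applying Fubini, the scalar function ${t\mapsto\scpr<\hat T^tf,g>}$ is Borel measurable for every ${f,g\in L^2}$. Thus ${t\mapsto\hat T^tf}$ is weakly measurable into the separable Hilbert space $L^2(X,\mu)$, hence by Pettis' theorem it is strongly (Bochner) measurable, and since ${\|\hat T^tf\|=\|f\|}$ it is locally Bochner integrable on~$\Set{R}$.

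Next I would form the averaged vectors
\[ g_\eps = \frac{1}{2\eps} \int_{-\eps}^{\eps} \hat T^sf \,ds \;\in\; L^2(X,\mu). \]
Using the group law $\hat T^{t+s}=\hat T^t\hat T^s$ and a change of variables,
\[ \hat T^tg_\eps - g_\eps = \frac{1}{2\eps}\Bigl(\int_{\eps-t}^{\eps+t}\hat T^sf\,ds \;-\; \int_{-\eps-t}^{-\eps+t}\hat T^sf\,ds\Bigr), \]
whose norm is bounded by ${|t|\eps^{-1}\|f\|}$ and tends to $0$ as ${t\to 0}$. Hence each $g_\eps$ is a strong continuity vector for the unitary group $\hat T^t$.

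The final step, and the main obstacle, is to show that ${g_\eps\to f}$ in $L^2$ as ${\eps\to 0}$ for every ${f\in L^2(X,\mu)}$, so that the strong continuity vectors form a dense subspace; a standard three-$\eps$ argument, using that ${\|\hat T^t\|=1}$, then extends strong continuity from this dense set to all of $L^2$. The required density is exactly the Lebesgue differentiation theorem for Bochner-integrable functions, applied to ${s\mapsto \hat T^sf}$ at the Lebesgue point ${s=0}$. That theorem is purely measure-theoretic and does not itself presume any continuity of the flow, so the apparent circularity is harmless. Granting this classical fact, the lemma follows.
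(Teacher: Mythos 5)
The paper does not actually prove this lemma: it is stated as ``the common property of measurable $\Set{R}$-actions on a Lebesgue space'' with a bare citation to \cite{KSF}, so there is no in-paper argument to compare yours against. What you have written is the classical von Neumann argument (joint measurability $\Rightarrow$ weak measurability of $t\mapsto \hat T^tf$ $\Rightarrow$ strong measurability via Pettis in the separable space $L^2(X,\mu)$, then averaging to manufacture continuity vectors), which is exactly the standard route to this fact; you also correctly read the statement as $R_f(t)\to R_f(0)=\|f\|^2$, the literal ``$\to 0$'' being a typo. A cosmetic remark: in your display for $\hat T^tg_\eps-g_\eps$ the limits of integration are slightly off --- for $0<t<2\eps$ the difference of the two averages is $\frac{1}{2\eps}\bigl(\int_{\eps}^{\eps+t}-\int_{-\eps}^{-\eps+t}\bigr)\hat T^sf\,ds$ --- but your bound $|t|\eps^{-1}\|f\|$ is exactly right.

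The one substantive gap is the final step. The Lebesgue differentiation theorem for Bochner-integrable functions guarantees only that \emph{almost every} $s_0$ is a Lebesgue point of $s\mapsto\hat T^sf$; it does not by itself place $s_0=0$ among them, and $\{0\}$ is a null set, so writing ``at the Lebesgue point $s=0$'' asserts precisely what needs justification. The fix is one line that you should make explicit: by the group law and unitarity,
$\frac{1}{2\eps}\int_{s_0-\eps}^{s_0+\eps}\|\hat T^sf-\hat T^{s_0}f\|\,ds=\frac{1}{2\eps}\int_{-\eps}^{\eps}\|\hat T^uf-f\|\,du$
for \emph{every} $s_0$, i.e.\ the averaged oscillation is independent of the base point; since it tends to $0$ for almost every $s_0$, it tends to $0$ for $s_0=0$ as well, giving $g_\eps\to f$ and hence the required density. (Alternatively, one can bypass Lebesgue points entirely: if $h$ is orthogonal to all averages $\int\phi(s)\hat T^sf\,ds$ with $\phi\in C_c(\Set{R})$, then $\langle\hat T^sf,h\rangle=0$ for a.e.\ $s$ for each $f$ in a countable dense set, whence $\hat T^{-s}h=0$ for a.e.\ $s$ and $h=0$.) With that line added your proof is complete.
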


Now using the well-known Levy's lemma we can deduce the convergence of 
the corresponding probability distributions $\sigma_n$, where ${{\Hat\sigma}_n = R_n(t)}$. 

\begin{lem}[Levy]
\label{lemLevy}
Given a sequence of probability distributions $\nu_n$ on $\Set{R}$ as well as 
a distribution $\nu^*$, if the characteristic functions%
\footnote{
	It would be more rigorous to use a sign ${\check\nu(t) = \int_{\Set{R}} e^{2\pi i\, tx} \,d\nu(x)}$ 
	for the inverse Fourier transform, but for simplicity we use the same symbol ``hat'' both for 
	direct $\widehat{r(t)}$ and inverse Fourier transform $\Hat\nu(t)$, since 
	it is evident from the context which kind of transform is used. 
	}
$\Hat\nu_n(t)$ converge pointwise to the characteristic function $\Hat\nu^*$ and 
the limit funtion $\Hat\nu^*$ is continuous at zero than $\nu_n$ converges weakly to $\nu^*$ 
with respect to the space $C_b(\Set{R})$ of bounded continuous functions (with the $C$-norm), i.e.\ 
\begin{equation}
	\forall \,\phi(x) \in C_b(\Set{R}) \quad \int \phi(x) \,d\nu_n \to \int \phi(x) \,d\nu. 
\end{equation}
\end{lem}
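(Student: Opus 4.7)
The plan is to prove this as the classical L\'evy continuity theorem, in three steps: establish tightness of the family $\{\nu_n\}$ using the continuity of $\Hat\nu^*$ at the origin, extract a weakly convergent subsequence via Prokhorov's selection theorem, and identify its limit as $\nu^*$ by uniqueness of the Fourier transform.

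For tightness, the main tool is the elementary inequality
\begin{equation}
	\nu\bigl(\{|x| > C/\delta\}\bigr) \:\le\: \frac{1}{\delta}\int_{-\delta}^{\delta}\bigl(1-\operatorname{Re}\Hat\nu(t)\bigr)\,dt,
\end{equation}
valid for every probability measure $\nu$ on $\Set{R}$ and every $\delta>0$, with a universal constant $C$ depending only on the normalisation of the Fourier transform. It is obtained by Fubini's theorem applied to $1-e^{2\pi itx}$ and the pointwise bound $1-(\sin y)/y \ge \tfrac12$ for $|y|$ larger than an absolute constant. Given $\varepsilon>0$, the continuity of $\Hat\nu^*$ at $0$ together with $\Hat\nu^*(0)=1$ permits one to fix $\delta$ so small that the right-hand side with $\nu^*$ in place of $\nu$ is below $\varepsilon/2$. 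Pointwise convergence $\Hat\nu_n \to \Hat\nu^*$ combined with the uniform bound $|1-\Hat\nu_n|\le 2$ on the compact interval $[-\delta,\delta]$ yields, by bounded convergence, that the analogous integrals against $\nu_n$ also stay below $\varepsilon$ for all large $n$. Finitely many exceptional indices are absorbed by enlarging the compact set $[-C/\delta, C/\delta]$, which establishes tightness.

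By Prokhorov's theorem, every subsequence of $\{\nu_n\}$ then contains a further subsequence $\nu_{n_k}$ converging weakly to some probability measure $\nu$ on $\Set{R}$. Since $x\mapsto e^{2\pi itx}$ is bounded and continuous for each fixed $t$, weak convergence forces $\Hat\nu_{n_k}(t) \to \Hat\nu(t)$ pointwise; comparing with the hypothesis gives $\Hat\nu=\Hat\nu^*$, and uniqueness of the Fourier transform on finite Borel measures forces $\nu=\nu^*$. A routine subsequence argument upgrades this to weak convergence of the whole sequence: were there some $\phi \in C_b(\Set{R})$ witnessing failure, one could extract a subsequence whose integrals of $\phi$ stay a fixed distance from $\int\phi\,d\nu^*$, yet the preceding step still supplies a sub-subsequence converging weakly to $\nu^*$, a contradiction.

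The main obstacle is the tightness step. Pointwise convergence of characteristic functions alone does not prevent mass from escaping to infinity, as the example $\nu_n = \df_n$ already shows: its characteristic function $e^{2\pi int}$ fails to converge to anything continuous at $0$. The role of the hypothesis that $\Hat\nu^*$ be continuous at the origin is precisely to rule out such escape, and the displayed inequality is the analytic device that converts this continuity at a single point into the required uniform tail bound on the whole family.
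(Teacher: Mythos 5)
Your proof is correct, and it is the standard textbook proof of L\'evy's continuity theorem: the truncation inequality bounding tail mass by $\frac{1}{\delta}\int_{-\delta}^{\delta}(1-\operatorname{Re}\Hat\nu(t))\,dt$, tightness via continuity of $\Hat\nu^*$ at $0$ together with bounded convergence, Prokhorov's selection theorem, identification of every subsequential limit by uniqueness of the Fourier transform, and the sub-subsequence trick. The paper itself supplies no proof of this lemma --- it is quoted as a classical fact --- so there is nothing to compare against; the only remark worth adding is that since the statement already assumes the limit is the characteristic function of a probability distribution $\nu^*$, the continuity-at-zero hypothesis is automatic, and your argument in fact establishes the stronger classical form in which one only assumes the pointwise limit is continuous at the origin and concludes it must be a characteristic function.
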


Let us make the following remark. It is important in Levy's lemma that we know in advance that 
the limit function ${r(t) = \lim_{n \to \infty} \Hat\nu_n(t)}$ coincides with the characteristic 
function of some probability distribution $\nu^*$. 

\begin{lem}
Any function ${\Hat R}_n(\tau)$ is a density of a positive measure $\sigma_n$ on~$\Set{R}$ and 
${\|\sigma_n\| = 1}$. 
\end{lem}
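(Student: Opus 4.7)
The plan is to recognize $R_n(t)$ as the autocorrelation (in the analytic sense) of a compactly supported $L^2$ function and then invoke the classical Wiener--Khintchine identity, so that $\Hat R_n$ comes out as $|\Hat f_{(n)}|^2$ up to a positive scaling.

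First I would rewrite $R_n$ as a convolution. Setting $g = f_{(n)}$ and $\tilde g(x) = \overline{g(-x)}$, one checks directly that
\begin{equation}
	R_n(t) = \frac{1-\gamma_n}{h_n} \int_{\Set{R}} g(t+x)\,\OL{g(x)} \, dx
	= \frac{1-\gamma_n}{h_n}\, (g \ast \tilde g)(t).
\end{equation}
Since $g$ is supported in the compact segment $[0,h_n]$ and is bounded (inherited from ${|f| \le M_0}$), we have ${g \in L^1(\Set{R}) \cap L^2(\Set{R})}$, so $\Hat g$ is well defined both as a pointwise bounded continuous function and as an $L^2$ function via Plancherel.

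Next I would take the Fourier transform of both sides and use that it interchanges convolution with pointwise product, together with $\widehat{\tilde g} = \overline{\Hat g}$. This gives
\begin{equation}
	\Hat R_n(\tau) = \frac{1-\gamma_n}{h_n}\, |\Hat g(\tau)|^2,
\end{equation}
which is manifestly non-negative and, by Plancherel's theorem, belongs to $L^1(\Set{R})$. Consequently the formula ${d\sigma_n(\tau) = \Hat R_n(\tau) \, d\tau}$ defines a finite positive Borel measure on $\Set{R}$ whose Fourier (cosine/exponential) transform is precisely the original correlation function $R_n$.

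Finally, for the total mass I would apply Plancherel once more, together with the preceding lemma which computes $R_n(0)$:
\begin{equation}
	\|\sigma_n\| = \int_{\Set{R}} |\Hat g(\tau)|^2 \,d\tau \cdot \frac{1-\gamma_n}{h_n}
	= \frac{1-\gamma_n}{h_n} \int_{\Set{R}} |g(x)|^2 \,dx
	= R_n(0) = 1.
\end{equation}
The argument is essentially algebraic once the correct normalization is tracked; the only mild point to watch is bookkeeping around the factor ${(1-\gamma_n)/h_n}$ and the conventions for $\Hat{\,}$ on functions versus on measures (the paper itself signals that the same hat is used for both direct and inverse Fourier transforms). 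No genuine obstacle is expected, and that is precisely why this lemma is needed as a preliminary before invoking Levy's theorem on the sequence~$\sigma_n$.
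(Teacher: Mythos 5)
Your proposal is correct and follows essentially the same route as the paper: both identify $R_n$ as the normalized autocorrelation $\frac{1-\gamma_n}{h_n}\,(f_{(n)} \ast \tilde f_{(n)})$, pass to the Fourier transform to obtain ${\Hat R}_n(\tau) = \frac{1-\gamma_n}{h_n}\,|\Hat f_{(n)}(\tau)|^2 \ge 0$, and read off the total mass from ${R_n(0) = 1}$. The only difference is that you make the Plancherel step for the mass computation explicit, where the paper leaves it implicit.
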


\begin{proof}
The first statement follows from the following explicit form of the correlation function 
for the shift action $S^t$:
\begin{equation}
	R_n(t) = f_{(n)} \smartconv{n} \tilde f_{(n)}, 
\end{equation}
where ${\tilde f_{(n)}(x) = \OL{f_{(n)}(-x)}}$ 
and the convolution is defined using the same normalizing 
multiplier like the $L^2$-norm on~$I_n$:
\begin{equation}
	(f \smartconv{n} g)(t) = \frac{1-\gamma_n}{h_n} \int_{\Set{R}} f(t-x)\,g(x) \,dx. 
\end{equation}
Hence, 
\begin{equation}
	{\Hat R}_n(\tau) = (f_{(n)} \smartconv{n} \tilde f_{(n)})\,\Hat{\hbox{\,}}\relax = 
		\frac{1-\gamma_n}{h_n}\,|\Hat f_{(n)}|^2(\tau), 
\end{equation}
and, finally, for the measure $\sigma_n = {\Hat R}_n(\tau)\,d\tau$ we have 
${\|\sigma_n\| = R(0) = 1}$. 
\end{proof}

Let us note that the symbol $\Hat f$ means the ordinary Fourier transform, 
\begin{equation}
	\Hat f(\tau) = \int_{\Set{R}} e^{2\pi i\, \tau\,t} f(t) \,dt, 
\end{equation}
and with the following notation for the normalized Fourier transform 
\begin{equation}
	\F_n[f](\tau) = \sqrt{\frac{1-\gamma_n}{h_n^{-1}}} \cdot {\Hat f}(\tau). 
\end{equation}
we get the following smart representation for ${\hat R}_n(t)$:
\begin{equation}
	{\Hat R}_n(\tau) = \bigl| \F[f]_{(n)} \bigr|^2(\tau). 
\end{equation}

\begin{lem}\label{lemWeakConvOfSigmaN}
The sequence $\sigma_n$ converges weakly to the spectral measure $\sigma$. 
\end{lem}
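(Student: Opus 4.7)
The plan is to invoke Levy's lemma (Lemma~\ref{lemLevy}) with $\nu_n = \sigma_n$ and $\nu^* = \sigma$, since essentially all the needed ingredients have already been assembled in the preceding chain of lemmas. What remains is just to verify the three hypotheses of Levy's lemma and to identify the limit measure correctly.

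First, I would note that each $\sigma_n$ is a probability measure on~$\Set{R}$: this is exactly the content of the lemma that computes ${\Hat R}_n = (1-\gamma_n) h_n^{-1} |\Hat f_{(n)}|^2$ and gives $\|\sigma_n\| = R_n(0) = 1$. The spectral measure $\sigma$ of $f$ is also a probability measure, since $\Hat\sigma(t) = R(t) = \scpr<T^tf,f>$ and $R(0) = \|f\|^2 = 1$. Second, the pointwise convergence $\Hat\sigma_n(t) = R_n(t) \to R(t) = \Hat\sigma(t)$ for every $t \in \Set{R}$ is already recorded as a corollary of the estimate $|R_n(t) - R(t)| \le \gamma_n + |t|/h_n$, combined with $\gamma_n \to 0$ and $h_n \to \infty$. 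Third, the limit characteristic function $\Hat\sigma(t) = R(t)$ is continuous at zero (in fact everywhere on~$\Set{R}$), which is precisely the content of the continuity lemma for measurable flows.

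With these three facts in place, Levy's lemma yields $\sigma_n \to \sigma$ weakly against $C_b(\Set{R})$, which is the desired conclusion.

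The only point requiring a bit of care — and the nominal obstacle to a purely mechanical application — is confirming in advance that the limit function $R(t)$ really is the characteristic function of some probability measure, as Levy's lemma demands. But this is guaranteed by the spectral theorem applied to the unitary group $\Hat T^t$: the positive-definite function $R(t) = \scpr<\Hat T^tf,f>$ is the Fourier transform of a unique finite positive Borel measure $\sigma$ on~$\Set{R}$, and the normalization $R(0) = 1$ makes $\sigma$ a probability measure. Thus there is no genuine analytic difficulty here; the lemma is a clean synthesis of the preceding three lemmas with Levy's criterion.
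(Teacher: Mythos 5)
Your proof is correct and follows essentially the same route as the paper: both apply Levy's lemma (Lemma~\ref{lemLevy}) to $\sigma_n$, using the pointwise convergence $R_n(t)\to R(t)$, the continuity of $R(t)$ at zero, and the spectral theorem to identify $R(t)$ as the characteristic function of the spectral measure~$\sigma$. Your version merely spells out the verification of the hypotheses in more detail than the paper's one-line argument.
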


\begin{proof}
From the spectral theorem we know that $R(t)$ is the (inverse) Fourier transform 
of the spectral measure~$\sigma$, hence, we can apply Levy's lemma to the sequence~$\sigma_n$ 
taking into account continuity at zero of~$R(t)$. 
\end{proof}

\begin{rem}
This lemma opens a set of non-trivial effects. 
For the sake of the forthcoming analytical investigation of the measure $\sigma$ 
it is important to know that $\sigma_n$ {\em a~priori\/} converges to~$\sigma$. Though, it is hard 
to follow the sequence of densities ${\Hat R}_n(\tau)$, for example, if we try 
to examine the local structure of ${\Hat R}_n(\tau)$ on an interval $(\tau_1,\tau_2)$. 
The behavior of the densities ${\Hat R}_n(\tau)$ could be very complicated. 

In the case of a rank one transformation it is shown that $\sigma$ can be calculated, 
in a sense, {\em directly}, 
in the form of generalized Riesz product $\prod_n |P_n(z)|^2$ (see~\cite{elAbdal1,AbPaPr,Bourgain}). 
In other words, there is no need to follow the sequence ${\Hat R}_n(z)$ for 
an individual function~$f(\x)$. On the contrary, the $\Set{R}$-case is more intriguing, 
and we need to apply additional restrictions to the huge variety of $\Set{R}$-based Riesz product. 
In our case the initial density ${\Hat R}_{(n_0)}(\tau)$ plays the role 
of regularizing multiplier in the Riesz product. Though, a~priory there is no obvious way 
to eliminate this multiplier and to extract some purely analytic description of 
the global convergence for the product $\prod_n |\cP_n(t)|^2$ on the real line~$\Set{R}$. 
This effect with regard to our case leads to the following phenomenon. 
In fact, we know a~priory that ${\sigma_n \to \sigma}$ weakly on~$\Set{R}$, but we can only 
control the structure of the limit distribution on any window $G_{a,b} = {(-b,-a) \cup (a,b)}$, ${a > 0}$, 
and a part of the mass in~$\sigma_n$ can ``escape'' into the boundary set 
\begin{equation}
	\cF = \Set{R} \sms \bigcup_{b > a > 0}^\infty G_{a,b} = \bigl\{ 0 \bigr\} 
\end{equation}
which is exactly the set containing zero point $\{0\}$. 
%
In other words, the limit measure $\sigma$ can have an atom at zero 
(this is the only possible measure on a one-point set), and to see that 
$\sigma$ is absolutely continuous we need to apply second ergodic argument, namely, 
the ergodicity of our rank one flow $T^t$ that ensures that $\sigma$ has no atom at zero%
\footnote{
	As usual, we consider the Koopman operator $\hat T$ in the subspace of functions with zero mean.
	}. 
Thus, the convergence of~$\sigma_n$ is established using several argumets 
coming from the ergodic theory background, 
and the general question on the global convergence of Riesz products on~$\Set{R}$ 
is an object for special investigations. 
\end{rem}


Now we pass to the second logical part of the proof that can be entitled: 
investigation of the limit distribution $\lim_{n \to \infty} \sigma_n$. 

\begin{lem}
The densities ${\Hat R}_n(\tau)$ can be calculated using the following reccurent relation 
\begin{equation}
	{\Hat R}_{n+1}(\tau) = {\Hat R}_n(\tau) \cdot |\cP_n(\tau)|^2, 
\end{equation}
where
\begin{equation}
	\cP_n(\tau) = \frac1{\sqrt{q_n}} \sum_{y=0}^{q_n-1} e^{2\pi i\, \tau\,\om_n(y)}. 
\end{equation}
Thus, 
\begin{equation}
	{\Hat R}_{N+1}(\tau) = {\Hat R}_{n_0}(\tau) \: \prod_{n=n_0}^N |\cP_n(\tau)|^2. 
\end{equation}
\end{lem}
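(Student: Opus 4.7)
The plan is to compute $\hat R_{n+1}$ directly from the defining formula $\hat R_n(\tau)=\frac{1-\gamma_n}{h_n}|\hat f_{(n)}(\tau)|^2$ derived in the preceding lemma, by expressing $f_{(n+1)}$ as an explicit convolution and then taking the Fourier transform. Since it suffices to establish the one-step recurrence, the final telescoped identity follows immediately by induction from $n=n_0$.

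First I would unpack the lifting formula $f_{(n+1)}(x)=f_{(n)}(\phi_n(x))$. The projection $\phi_n$ acts as translation by $-\om_n(j)$ on each subinterval $(\om_n(j),\om_n(j)+h_n)\subset I_{n+1}$, and $f_{(n)}$ is supported on $[0,h_n]$ by assumption. Because these subintervals are disjoint, the lift splits as a sum of translates,
\begin{equation}
	f_{(n+1)}(x) \;=\; \sum_{j=0}^{q_n-1} f_{(n)}\bigl(x-\om_n(j)\bigr),
\end{equation}
which is nothing but the convolution of $f_{(n)}$ with the discrete measure $\sum_{j=0}^{q_n-1}\delta_{\om_n(j)}$ on $\Set{R}$. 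Taking the Fourier transform with the sign convention used in the paper yields
\begin{equation}
	\hat f_{(n+1)}(\tau) \;=\; \hat f_{(n)}(\tau)\cdot\sum_{j=0}^{q_n-1} e^{2\pi i\,\tau\om_n(j)} \;=\; \sqrt{q_n}\,\hat f_{(n)}(\tau)\,\cP_n(\tau).
\end{equation}

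Next I would combine this with the formula $\hat R_k(\tau)=\frac{1-\gamma_k}{h_k}|\hat f_{(k)}(\tau)|^2$ for $k=n,n+1$. This gives
\begin{equation}
	\hat R_{n+1}(\tau) \;=\; \frac{1-\gamma_{n+1}}{h_{n+1}}\,q_n\,|\hat f_{(n)}(\tau)|^2\,|\cP_n(\tau)|^2
		\;=\; \frac{(1-\gamma_{n+1})\,q_n\,h_n}{(1-\gamma_n)\,h_{n+1}}\,\hat R_n(\tau)\,|\cP_n(\tau)|^2.
\end{equation}
The recurrence therefore reduces to the normalization identity $(1-\gamma_{n+1})\,q_n\,h_n=(1-\gamma_n)\,h_{n+1}$. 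This is exactly what the equivariance relation $\phi_n^*\mu_{n+1}=\mu_n$ forces: the map $\phi_n$ covers $(0,h_n)$ by $q_n$ linear branches of slope $1$, so the pushforward of the Lebesgue part of $\mu_{n+1}$ has density $q_n(1-\gamma_{n+1})/h_{n+1}$ on $(0,h_n)$, which must equal $(1-\gamma_n)/h_n$. Once this bookkeeping is done, the telescoped product
\begin{equation}
	\hat R_{N+1}(\tau) \;=\; \hat R_{n_0}(\tau)\,\prod_{n=n_0}^{N}|\cP_n(\tau)|^2
\end{equation}
is just the iteration of the one-step formula.

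The only delicate point, and the one that deserves a careful check, is the matching of normalization factors between the analytic Fourier identity and the measure-theoretic equivariance relation; everything else is a direct unfolding of definitions. I expect no essential obstacle beyond verifying this constant, since the convolution structure of the tower lift is already built into the geometric description of $\phi_n$.
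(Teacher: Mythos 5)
Your proposal is correct and follows essentially the same route as the paper: expressing $f_{(n+1)}$ as the convolution of $f_{(n)}$ with $\sum_j \delta_{\om_n(j)}$, taking Fourier transforms, and absorbing the factor $q_n$ via the normalization identity $\frac{h_{n+1}}{q_n h_n}=\frac{1-\gamma_{n+1}}{1-\gamma_n}$. The only (welcome) addition is that you derive this identity from the equivariance $\phi_n^*\mu_{n+1}=\mu_n$, whereas the paper simply cites it as the fundamental relation of the construction.
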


\begin{proof}
Since 
\begin{equation}
	R_n(t) = (f_{(n)} \smartconv{n} \tilde f_{(n)})(t), 
\end{equation}
it is sufficient to look at 
the corresponding recurrent formula for~$f_{(n)}$. Indeed, it can be easily seen that 
\begin{equation}
	f_{(n+1)} = f_{(n)} \conv (\df_{\om_n(0)} + \dots + \df_{\om_n(q_n-1)}), 
\end{equation}
hence, passing to Fourier transforms, we have 
\begin{equation}
	{\Hat f}_{(n+1)}(\tau) = 
		{\Hat f}_{(n)}(\tau) \cdot \sum_{y=0}^{q_n-1} e^{2\pi i\, \tau\,\om_n(y)} = 
		{\Hat f}_{(n)}(\tau) \cdot \sqrt{q_n}\:P_n(\tau), 
\end{equation}
and
\begin{equation}
	{\Hat R}_{n+1}(\tau) = 
		\frac{1-\gamma_{n+1}}{h_{n+1}}\: |{\Hat f}_{(n+1)}(\tau)|^2 =
		\frac{1-\gamma_{n+1}}{h_{n+1}}\: |{\Hat f}_{(n)}(\tau)|^2 \cdot q_n\:|P_n(\tau)|^2 = 
		{\Hat R}_n \cdot |P_n(\tau)|^2. 
\end{equation}
Here we use the following fundamental relation 
\begin{equation}
	\frac{h_{n+1}}{q_n h_n} = \frac{1-\gamma_{n+1}}{1-\gamma_n}, 
\end{equation}
implying 
\begin{equation}
	\frac{1-\gamma_{n+1}}{h_{n+1}} \cdot q_n = (1-\gamma_{n+1}) \cdot \frac{1-\gamma_n}{h_n\,(1-\gamma_{n+1})} 
		= \frac{1-\gamma_n}{h_n} 
\end{equation}
and ${\Hat R}_{n+1}(\tau) = 
	(1-\gamma_n)\,h_n^{-1}\:|{\Hat f}_{(n)}(\tau)|^2 \cdot |P_n(\tau)|^2 = 
	{\Hat R}_n(\tau) \cdot |P_n(\tau)|^2$. 
\end{proof}

Our next purpose is to analyze the convergence of the densities ${\Hat R}_n(\tau)$ 
on a window $G_{a,b}$, ${a > 0}$, separated from zero. 
We are going to prove the following common lemma 
and to apply this observation to the Riesz product 
for a sequence of flat polynomials $\cP_n(\tau)$. 

\begin{lem}
\label{lemWeakConvergesOfSigmaN}
Consider a sequence of positive probability distributions $\nu_n$ on~$\Set{R}$ 
having regular densities ${\rho_n \in L^1(\Set{R})}$. Suppose that $\nu_n$ 
converges weakly to a probability distribution $\nu^{**}$ and, at the same time, 
for any segment $[a,b]$, ${a > 0}$, the functions $\rho_n|_{[a,b]}$ converges in $L^1[a,b]$. 
Then the limit distribution splits into a sum of an absolutely continuous measure 
and an atom at zero,
\begin{equation}
	\nu^{**} = \a\,\df_0 + \Phi(\tau)d\tau, \quad \text{where} \quad \a = \nu(\{0\}), 
\end{equation}
moreover, for any segment $[a,b]$, ${a > 0}$, the following holds in $L^1[a,b]\:$:
\begin{equation}
	\lim_{n \to \infty} \rho_n|_{[a,b]} = \Phi|_{[a,b]}, 
\end{equation}
and $\|\Phi\|_1 = \nu^{**}(\Set{R} \sms \{0\}) = 1 - \a$. 
\end{lem}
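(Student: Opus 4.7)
\emph{Plan.} The idea is to build the candidate density $\Phi$ directly from the $L^1$-convergence hypothesis and then exploit the weak convergence to identify the remaining mass as an atom at $0$.

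First, for each segment $[a,b]$ not containing $0$ let $\Phi_{[a,b]}$ denote the $L^1[a,b]$-limit of $\rho_n|_{[a,b]}$ given by the hypothesis. If $[a',b'] \subset [a,b]$, restriction from $L^1[a,b]$ to $L^1[a',b']$ is continuous, so $\Phi_{[a,b]}|_{[a',b']} = \Phi_{[a',b']}$ almost everywhere. Gluing yields a locally $L^1$ function $\Phi$ on $\Set{R}\setminus\{0\}$, and extracting on each segment an almost everywhere convergent subsequence shows $\Phi \ge 0$. For integrability, for any $0 < a < b$
\begin{equation}
\int_a^b \Phi\,d\tau = \lim_{n\to\infty} \int_a^b \rho_n\,d\tau \le \liminf_{n\to\infty} \nu_n(\Set{R}) = 1,
\end{equation}
and letting $a \to 0^+$, $b \to \infty$ (monotone convergence, plus the symmetric bound on $(-\infty,0)$) gives $\Phi \in L^1(\Set{R})$ with $\|\Phi\|_1 \le 1$.

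Next I would identify the part of $\nu^{**}$ outside $\{0\}$ with $\Phi\,d\tau$. For any continuous $\phi$ whose support is a compact subset of $\Set{R}\setminus\{0\}$, the support lies inside some $[-b,-a]\cup[a,b]$ with $a>0$, the function $\phi$ is in $C_b(\Set{R})$, and
\begin{equation}
\int \phi\,d\nu^{**} = \lim_{n\to\infty} \int \phi\,d\nu_n = \lim_{n\to\infty} \int \phi\,\rho_n\,d\tau = \int \phi\,\Phi\,d\tau,
\end{equation}
the first equality by weak convergence, the last by $L^1$-convergence on the support of $\phi$ combined with boundedness of $\phi$. Hence $\nu^{**}$ restricted to $\Set{R}\setminus\{0\}$ equals $\Phi\,d\tau$, and the remainder, supported on the singleton $\{0\}$, is $\a\,\df_0$ for some $\a \ge 0$. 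Comparing total masses of two probability measures gives $\a = 1 - \|\Phi\|_1 = \nu^{**}(\{0\})$, as required.

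The only nonautomatic point---and the one worth the most care---is ruling out mass escaping to infinity, i.e.\ ensuring that the missing mass $1-\|\Phi\|_1$ is genuinely an atom at $0$ and not mass lost at infinity. This amounts to tightness of the sequence $(\nu_n)$, which is essentially free here: Levy's lemma delivers weak convergence against $C_b(\Set{R})$, and testing against continuous bumps approximating $\mathbf{1}_{[-R,R]}$ and sending $R\to\infty$ transfers the full unit mass of each $\nu_n$ to $\nu^{**}$, closing the accounting.
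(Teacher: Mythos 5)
Your proof is correct and follows essentially the same route as the paper's: both glue the local $L^1$-limits on windows into a single integrable density $\Phi$, identify $\nu^{**}$ with $\Phi\,d\tau$ away from the origin by testing against functions supported off a neighbourhood of $0$, and attribute the leftover mass to an atom at $0$ by total-mass accounting. The only (cosmetic) difference is that the paper performs the identification via an explicit $\eps$-partition of unity into pieces near $0$, on a window, and near infinity, while you invoke the uniqueness of a finite Borel measure on $\Set{R} \setminus \{0\}$ determined by its action on compactly supported continuous functions.
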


The idea of this lemma is very simple. We control the convergence of $\sigma_n$ 
on any window $G_{a,b}$, in fact, $\sigma_n$ converge even in strong sense, 
but some mass can ``escape'' outside all windows $G_{a,b}$, ${0 < a < b}$. 
Thus, generally we must take into account the atom at zero. 

\begin{proof}
First, notice that in our consideration we can omit a set of points which are 
very far from zero, i.e.\ using a simple fact of real anaysis we can find ${L > 0}$ 
such that 
\begin{equation}
	\nu^{**}(\Set{R} \sms [-L,L]) < \eps 
\end{equation}
for some fixed ${\eps > 0}$, since $\nu^{**}$ is a $\Sigma$-finite Borel measure on~$\Set{R}$. 
Then, roughly speaking, we can restrict ourselves to 
the compact set $[-L,L]$ and identify measures with the corresponding 
bounded linear functionals on $C[-L,L]$. 
We also choose ${\delta > 0}$ such that 
\begin{equation}
	\nu^{**}\bigl( (-\delta,0) \cup (0,\delta) \bigr) < \eps. 
\end{equation}
Now let us consider a function ${\varphi \in C_b(\Set{R})}$ and split $\varphi$ 
into the sum
\begin{equation}
	\varphi = \varphi_0 + \varphi_1 + \varphi_2, \qquad \varphi_j(\tau) = r_j(\tau)\varphi(\tau), \qquad 
	r_j \in C_b(\Set{R}), 
\end{equation}
where 
\begin{gather}
	r_0(\tau) + r_1(\tau) + r_2(\tau) \equiv 1, \qquad |r_j(\tau)| \le 1, 
	\\
	\supp \,r_0 = [-\delta,\delta], \qquad 
	\supp \,r_1 = G_{\delta/2,L+1}, \qquad 
	\supp \,r_2 = (-\infty,-L] \cup [L,\infty), 
	\\
	r_1(\tau) \equiv 1 \quad \text{if} \quad \tau \in G_{\delta,L}. 
\end{gather}
For simlicity, let us use notation 
\begin{equation}
	\scpr<\varphi,\nu> \stackrel{def}{=} \int \varphi(\tau) \,d\nu. 
\end{equation}
We know that
\begin{equation}
	\lim_{n \to \infty} \int \varphi(\tau) \,d\nu_n = 
		\scpr<\varphi,\nu^{**}> = 
		\scpr<\varphi_0,\nu^{**}> + 
		\scpr<\varphi_1,\nu^{**}> + 
		\scpr<\varphi_2,\nu^{**}>, 
\end{equation}
and
\begin{equation}
	\left|
		\scpr<\varphi_0,\nu^{**}> - \a \varphi(0)
	\right|
	\le \eps \cdot \|\varphi\|_\infty, 
	\qquad
	\left|
		\scpr<\varphi_2,\nu^{**}> 
	\right|
	\le \eps \cdot \|\varphi\|_\infty, 
	\label{eEstNuStarStarJ}
\end{equation}
hence, 
\begin{equation}
	\Bigl|
		\scpr<\varphi,\nu^{**}> - 
		\scpr<\varphi_1,\nu^{**}> - \a\varphi(0) 
	\Bigr|
	< 2\eps \cdot \|\varphi\|_\infty. 
\end{equation}
At the same time, for any component $\varphi_j$ we have 
${ \scpr<\varphi_j,\nu_n> \to \scpr<\varphi_j,\nu^{**}> }$ as ${n \to \infty}$, 
hence, passing to the limit we see that 
\begin{equation}
	\scpr<\varphi_1,\nu^{**}> = 
		\lim_{n \to \infty} \int_{\delta/2}^{L+1} \varphi(\tau) \,\rho_n(\tau) \,d\tau 
	= \int_{\delta/2}^{L+1} \varphi(\tau) \,\Phi|_{[\delta/2,L+1]}(\tau) \,d\tau, 
\end{equation}
where $\Phi|_{[\delta/2,L+1]}$ is the limit of $\rho_n$ in $L^1[\delta/2,L+1]$. 
It can be easlily checked that $L^1$-limit of $\rho_n$ does not depend on the choice 
of a window $G_{a,b}$, ${a > 0}$, so that $\Phi|_{[a,b]}$ is the restriction to~$[a,b]$ 
of some locally $L^1$-function $\Phi$ on~$\Set{R}$. More exactly, 
if ${\rho_n \to \Phi_1}$ in $L^1[a,b]$ and ${\rho_n \to \Phi_2}$ in $L^1[a',b']$ 
for a wider segment ${[a',b'] \supseteq [a,b]}$ then ${\Phi_1 = \Phi_2|_{[a,b]}}$. 
Notice that since $\nu_n$ converges weakly to $\nu^{**}$ 
\begin{equation}
	\left| \int \varphi_1(\tau) \,\Phi(\tau) \,d\tau \right| = 
	\lim_{n \to \infty} |\scpr<\varphi_1,\nu_n>| = |\scpr<\varphi_1,\nu^{**}>| 
		\le \|\nu^{**}\| \cdot \|\varphi_1\|_\infty = \|\varphi_1\|_\infty, 
\end{equation}
thus, $\|\Phi|_{G_{a,b}}\|_1 \le 1$ for any window $G_{a,b}$, ${a > 0}$, 
and it evidently follows from this extimate that 
\begin{equation} 
	\Phi \in L^1(\Set{R}) \quad \text{and} \quad \|\Phi\|_1 \le 1. 
\end{equation}
Integrating all the above arguments we can deduce that the limit measure 
${\nu^{**} = \a\df_0 + \Phi(\tau)\,dt}$, and aplying it to the unit function 
we see that ${\|\Phi\|_1 = 1-\a}$. 
\end{proof}

\begin{lem}
\label{lemLOneConverges}
Consider a uniformely bounded sequence of 
non-negative continuous functions $Q_n(\tau)$ on a segment $[a,b]$ 
satisfying the following conditions 
\begin{equation}
	\forall\, \tau \in [a,b] \quad Q_n(\tau) \le M, \qquad 
	\bigl\|\, Q_n - 1 \,\bigr\|_1 \le \eps_n, 
\end{equation}
and suppose that ${ \sum_n (M^n \eps_n)^{1/2} < \infty }$ and ${M > 1}$. 
Then the Riesz product $\prod_n Q_n$ converges in~$L^1[a,b]$. 
\end{lem}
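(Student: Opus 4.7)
The plan is to show that the partial products $\Pi_N := \prod_{n=1}^N Q_n$ form a Cauchy sequence in $L^1[a,b]$, from which convergence follows by completeness. I would work with the increments $a_N := \|\Pi_N - \Pi_{N-1}\|_1$, noting that $\Pi_N - \Pi_{N-1} = \Pi_{N-1}(Q_N - 1)$, so
\[
a_N = \int_a^b \Pi_{N-1}\,|Q_N - 1|\,d\tau \le \|\Pi_{N-1}\|_2 \cdot \|Q_N - 1\|_2
\]
by Cauchy--Schwarz. The point of splitting the integrand this way is that the two factors admit very different estimates: one uses the $L^1$-smallness of $Q_N-1$, the other uses only a crude $L^\infty$-growth bound on $\Pi_{N-1}$.

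To bound the two $L^2$-norms, I would exploit the pointwise estimate $|Q_N - 1| \le M$ (since $M > 1 \ge 1$) to get $\|Q_N - 1\|_2^2 \le M\|Q_N - 1\|_1 \le M\eps_N$, and similarly $0 \le \Pi_{N-1} \le M^{N-1}$ gives $\|\Pi_{N-1}\|_2^2 \le M^{N-1}\|\Pi_{N-1}\|_1$. Combining these yields
\[
a_N \le M^{N/2}\,\sqrt{\eps_N}\,\sqrt{\|\Pi_{N-1}\|_1},
\]
so everything comes down to a uniform bound on $\|\Pi_N\|_1$.

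The hard part will be this uniform bound: the trivial estimate $\|\Pi_N\|_1 \le M^N(b-a)$ is far too weak and would require $\sum M^N \sqrt{\eps_N} < \infty$, which is much stronger than the hypothesis. The trick I would use is to observe that $\|\Pi_N\|_1 \le \|\Pi_{N-1}\|_1 + a_N$, and introduce $B_N := \sqrt{\|\Pi_N\|_1}$. The previous display then gives the recursion
\[
B_N^2 - B_{N-1}^2 \le M^{N/2}\sqrt{\eps_N}\cdot B_{N-1}.
\]
Dividing by $B_N + B_{N-1}$ (and handling the case $B_N \le B_{N-1}$ trivially) turns this into the telescoping inequality $B_N - B_{N-1} \le M^{N/2}\sqrt{\eps_N}$. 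Summing from $B_0 = \sqrt{b-a}$ and using the hypothesis $\sum (M^n \eps_n)^{1/2} < \infty$ produces a uniform bound $\|\Pi_N\|_1 \le C$ independent of $N$.

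Inserting this back into the estimate for $a_N$ gives $\sum_N a_N \le \sqrt{C}\sum_N (M^N \eps_N)^{1/2} < \infty$, so $\{\Pi_N\}$ is Cauchy in $L^1[a,b]$ and converges to some limit function, which is what it means for the Riesz product to converge in $L^1[a,b]$. The square-root substitution $B_N = \sqrt{\|\Pi_N\|_1}$ is precisely the device that converts the naive recursion into one matching the stated summability hypothesis; all other steps are routine applications of Cauchy--Schwarz and the pointwise bound on $Q_n$.
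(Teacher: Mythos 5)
Your proof is correct, but it follows a genuinely different route from the paper's. The paper argues pointwise: it introduces the exceptional sets $A_n = \{x : |Q_n(x)-1| > \alpha_n\}$ with $\alpha_n = (M^n\eps_n)^{1/2}$, bounds $\lambda(A_n) \le \eps_n/\alpha_n = \alpha_n M^{-n}$ by Chebyshev, applies Borel--Cantelli to show almost every $x$ lies in only finitely many $A_n$, and then builds an explicit integrable majorant $M(x) = \Pi_0 M^{h(x)}$ (where $h(x)$ is the last index with $x \in A_{h(x)}$ and $\Pi_0 = \prod(1+\alpha_n)$), concluding by dominated convergence. You instead run a purely normed-space argument: Cauchy--Schwarz on the increment $\Pi_N - \Pi_{N-1} = \Pi_{N-1}(Q_N-1)$, the interpolation bounds $\|Q_N-1\|_2^2 \le M\eps_N$ and $\|\Pi_{N-1}\|_2^2 \le M^{N-1}\|\Pi_{N-1}\|_1$, and the substitution $B_N = \|\Pi_N\|_1^{1/2}$, which turns the recursion $B_N^2 \le B_{N-1}^2 + (M^N\eps_N)^{1/2}B_{N-1}$ into the telescoping bound $B_N - B_{N-1} \le (M^N\eps_N)^{1/2}$ and hence a uniform bound on $\|\Pi_N\|_1$. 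Both arguments use the hypothesis $\sum (M^n\eps_n)^{1/2} < \infty$ in an essential and structurally parallel way (it is exactly the threshold at which the $M^n$ growth is absorbed by a square root of $\eps_n$), and both are complete. What each buys: your version is shorter, avoids any appeal to pointwise a.e.\ convergence, and yields the quantitative bounds $\|\Pi_N\|_1 \le \bigl(\sqrt{b-a} + \sum_n (M^n\eps_n)^{1/2}\bigr)^2$ and $\sum_N \|\Pi_N - \Pi_{N-1}\|_1 < \infty$; the paper's version is longer but produces an explicit $L^1$-majorant and almost-everywhere control of the set where the product stays close to $1$, which is then reused in lemma~\ref{lemRieszConvRate}, so your argument could not simply be substituted for it without reproving that subsequent estimate by other means.
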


\begin{rem}
Let us observe that it is {\em not sufficient\/} to require $L^1$-$\eps-n$-flatness 
of the multipliers $Q_n$ whatever rate of decay we require for~$\eps_n$. 
Indeed, let us show, for example, that there exists a sequence of 
uniformely bounded functions $Q_n$ 
on $[0,1]$ with exponential estimate ${ \|Q_n - 1\|_1 \le \eps_n = 2^{-n} }$ such that 
$\prod_n Q_n$ converges to the delta-function~$\df_1$. We define $Q_n(\tau)$ as follows 
\begin{equation}
	Q_n(\tau) = \left\{ 
			\begin{alignedat}{2}
				&1, 	\quad		& & \text{if $0 \le \tau < 1 - 2^{-n}$}, \\ 
				&0, 					& & \text{if $1 - 2^{-n} \le \tau < 1 - 2^{-n-1}$}, \\ 
				&2, 					& & \text{if $1 - 2^{-n-1} \le \tau < 1$}  
			\end{alignedat}
		\right.
\end{equation}
Clearly, $Q_n(\tau) \approx 1$ everywhere but $2^{-n}$-small set ${ [1 - 2^{-n},\:1] }$, 
and also ${ \|Q_n - 1\|_1 = 2^{-n} }$, but 
\begin{equation}
	\prod_{n=0}^N Q_n = \left\{ 
			\begin{alignedat}{2}
				&0, 							& & \text{if $0 \le \tau < 1 - 2^{-N-1}$}, \\ 
				&2^{N+1},	\quad		& & \text{if $1 - 2^{-N-1} \le \tau < 1$}  
			\end{alignedat}
		\right. 
		\;\; \to \df_1, \quad \text{when $n \to \infty$}. 
\end{equation}
\end{rem}

\begin{quest}
An observation of positive kind: if non-negative functions $Q_n$ on $[0,1]$ 
are indepedent as random variables with respect to the Lebesgue measure 
and ${ \|Q_n - 1\|_1 < \eps_n }$, ${\sum_n \eps_n < \infty}$, 
then the product $\prod_n Q_n$ convergence in~$L^1[0,1]$. 
This idea can be applied as well to the sequence $|\cP_n(\tau)|^2$ of flat polynomials 
associated with a~rank one flow, but they are just very close to indpendence. 

How close $Q_n$ should come to satisfying the independence property 
to ensure the $L^1$-convergence of the Riesz product? 
\end{quest}

\begin{proof}[Proof of lemma~\ref{lemLOneConverges}]
%
Let us define sets 
\begin{equation}
	A_n = \bigl\{ x \where |Q_n(x) - 1| > \a_n \bigr\}, \qquad 
	\la(A_n) \le \frac{\eps_n}{\a_n}, 
\end{equation}
where ${ \alpha_n = (M^n \eps_n)^{1/2} }$ and $\la$ is the Lebesgue measure on~$\Set{R}$. 
We use Chebyshev's inequality estimating the value of~$\la(A_n)$. 
Remark that ${ \sum_n \alpha_n < \infty }$ by the conditions of the lemma. 
Let us build a code any point $x$ setting ${x_n = 1}$ if ${x \in A_n}$ and ${x_n = 0}$ otherwise, 
and let $h(x)$ be the index of the last ``$1$'' in the code of~$x$. 
Taking into account that 
\begin{equation}
	\sum_{n=1}^\infty \la(A_n) 
		\le \sum_{n=1}^\infty \frac{\eps_n}{\a_n} 
		=   \sum_{n=1}^\infty \a_n M^{-n} 
		\le \sum_{n=1}^\infty \a_n < \infty 
\end{equation}
then in force of Borel--Cantelli lemma $h(x)$ is correctly defined for almost all points~$x$. 
Set 
\begin{equation}
	B_n = \{x \where h(x) = n\}. 
\end{equation}
Since ${B_n \subseteq A_n}$, then ${\la(B_n) \le \la(A_n)}$. 
We cannot control the behaviour of mutipliers $Q_k$ on~$B_n$ for the indexes ${k \le n}$, 
but we have the global estimate for the $Q_k$, 
\begin{equation}
	\forall\,x \in B_n \quad \prod_{k=1}^N Q_k = 
		\prod_{k=1}^n Q_k \cdot \prod_{k=n+1}^N Q_k \le 
		M^n \cdot \prod_{k \:>\: n} (1 + \a_n) \le \Pi_0\,M^n, 
\end{equation}
where
\begin{equation}
	\Pi_0 = \prod_{n=1}^\infty (1 + \a_n). 
\end{equation}
Let us define the following global $L^1$-majorant for our Riesz product: 
\begin{equation}
	M(x) = \Pi_0\,M^n \quad \text{if} \quad x \in B_n 
\end{equation}
and $M(x) = \Pi_0$ if $x$ does not belong to any $B_n$. 
The function $M(x)$ is integrable (belongs to $L^1[a,b]$), 
since the following series converges: 
\begin{equation}
	\sum_{n=1}^\infty \mu(B_n) \cdot \Pi_0\,M^n \le 
	\Pi_0\, \sum_{n=1}^\infty \frac{\eps_n}{\a_n} \cdot M^n = \Pi_0\, \sum_{n=1}^\infty \a_n < \infty. 
\end{equation}
Thus, by the Lebesgue dominated convergence theorem our product $\prod_n Q_n(x)$ 
converges in $L^1[a,b]$, whenever it converges pointwise for almost all points~$x$. 
\end{proof}

\begin{lem}
In the scope of the previous lemma it is enough 
to require the following modified set of conditions:
\begin{gather}
	Q_n(x) \le M_n, \qquad 1 < M_1 < \dots < M_n < \dots, \\ 
	\a_n^2 = \eps_n \, \prod_{k=1}^n M_n, \qquad \sum_{n=1}^\infty \a_n < \infty. 
\end{gather}
\end{lem}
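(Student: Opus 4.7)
The plan is to repeat the argument of lemma~\ref{lemLOneConverges} essentially verbatim, replacing the uniform bound $M^n$ for the partial product $Q_1 \cdots Q_n$ by the sharper quantity $\prod_{k=1}^n M_k$, and checking that the key integrability estimate still closes under the hypothesis $\sum_n \alpha_n < \infty$ with the new choice $\alpha_n^2 = \eps_n\prod_{k=1}^n M_k$.

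First I would define the exceptional sets
\begin{equation}
    A_n = \{x \where |Q_n(x)-1| > \alpha_n\},
\end{equation}
and apply Chebyshev's inequality in the form $\lambda(A_n) \le \eps_n/\alpha_n$. With the new definition of $\alpha_n$ one directly computes $\eps_n/\alpha_n = \alpha_n / \prod_{k=1}^n M_k$, and since $M_k > 1$ we get $\sum_n \lambda(A_n) \le \sum_n \alpha_n < \infty$. Borel--Cantelli then yields, for almost every $x$, a well-defined last index $h(x)$ with $x \in A_{h(x)}$; let $B_n = \{x \where h(x) = n\}$, so $\lambda(B_n) \le \lambda(A_n) \le \eps_n/\alpha_n$.

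Next I would build the $L^1$-majorant. For $x \in B_n$ and $k > n$ one has $Q_k(x) \le 1 + \alpha_k$ by definition of $h$, while for $k \le n$ we only know the global bound $Q_k(x) \le M_k$. Hence
\begin{equation}
    \prod_{k=1}^N Q_k(x) \le \Bigl(\prod_{k=1}^n M_k\Bigr) \cdot \prod_{k>n}(1+\alpha_k) \le \Pi_0 \prod_{k=1}^n M_k,
    \qquad
    \Pi_0 = \prod_{k=1}^\infty (1+\alpha_k),
\end{equation}
where $\Pi_0 < \infty$ by the summability of $\alpha_k$. Define $M(x) = \Pi_0 \prod_{k=1}^{h(x)} M_k$ on $\bigcup_n B_n$ and $M(x) = \Pi_0$ on the complement. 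Its integral is bounded by
\begin{equation}
    \Pi_0 (b-a) + \Pi_0 \sum_{n=1}^\infty \lambda(B_n) \prod_{k=1}^n M_k
    \le \Pi_0 (b-a) + \Pi_0 \sum_{n=1}^\infty \frac{\eps_n}{\alpha_n} \prod_{k=1}^n M_k
    = \Pi_0 (b-a) + \Pi_0 \sum_{n=1}^\infty \alpha_n < \infty,
\end{equation}
which is the exact analogue of the estimate in lemma~\ref{lemLOneConverges}.

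Finally, the partial products $\prod_{k=1}^N Q_k$ converge pointwise almost everywhere: outside $\bigcup_n A_n$ (a set whose complement has full measure by Borel--Cantelli) only finitely many factors differ from $1$ by more than $\alpha_k$, and on the tail the product $\prod_k(1+|Q_k-1|)$ is dominated by $\prod_k(1+\alpha_k) < \infty$, so absolute convergence of $\sum_k |Q_k-1|$ on this cofinite set gives pointwise convergence. The Lebesgue dominated convergence theorem, applied with the majorant $M(x)$ constructed above, now yields convergence of $\prod_n Q_n$ in $L^1[a,b]$. The only delicate point is the bookkeeping in the majorant estimate, but it is resolved by the identity $\eps_n \prod_{k=1}^n M_k / \alpha_n = \alpha_n$, which is built into the definition of $\alpha_n$.
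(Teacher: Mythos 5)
Your proposal is correct and follows exactly the route of the paper, which likewise just reruns the proof of lemma~\ref{lemLOneConverges} with $M^n$ replaced by $\prod_{k=1}^n M_k$ and verifies that both key series --- $\sum_n \eps_n/\a_n = \sum_n \a_n \prod_{k=1}^n M_k^{-1}$ for Borel--Cantelli and $\sum_n (\eps_n/\a_n)\prod_{k=1}^n M_k = \sum_n \a_n$ for the majorant --- still converge. You simply spell out the intermediate steps (exceptional sets, majorant, dominated convergence) that the paper leaves implicit.
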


\begin{proof}
In fact, let us check for convergence the serieses examined in the proof of the previous lemma. 
First, we have the following estimate for Borel--Cantelli lemma: 
\begin{equation}
	\sum_{n=1}^\infty \frac{\eps_n}{\a_n} = \sum_{n=1}^\infty \a_n \prod_{k=1}^n M_k^{-1} < 
		\sum_{n=1}^\infty \a_n < \infty. 
\end{equation}
Next,
\begin{equation}
	\sum_{n=1}^\infty 
		\left( 
			\frac{\eps_n}{\a_n} \cdot \prod_{k=1}^n M_k 
		\right) 
		=
		\sum_{n=1}^\infty \a_n < \infty. 
\end{equation}
Finally, ${\sum_n \a_n < \infty}$ by the conditions of the lemma. 
\end{proof}

\begin{lem}\label{lemRieszConvRate}
Assume that the conditions of lemma~\ref{lemLOneConverges} are satisfied 
and set ${\eps_0 = \sum_n \alpha_n}$. 
Then there is a~set $U$ of measure ${b - a - \eps_0}$ such that 
\begin{equation}
	\forall\, x \in U \quad \left| \prod_n Q_n - 1 \right| < \Pi_0 - 1 \le \exp\!\left( \sum_n \a_n \right) - 1. 
\end{equation}
If we additionally require that all ${\eps_0 < 1}$ then 
${ \Pi_0 - 1 < 3 \eps_0 }$. 
\end{lem}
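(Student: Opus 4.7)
The idea is to recycle the sets $A_n = \{x : |Q_n(x) - 1| > \alpha_n\}$ introduced in the proof of lemma~\ref{lemLOneConverges} and simply take $U = [a,b] \setminus \bigcup_n A_n$. Chebyshev's inequality gives $\lambda(A_n) \le \eps_n/\alpha_n = \alpha_n M^{-n} \le \alpha_n$ (and an analogous bound $\alpha_n \prod_{k=1}^n M_k^{-1} \le \alpha_n$ in the generalized setting of the preceding lemma), so
\[
\lambda\bigl([a,b] \sms U\bigr) \le \sum_{n=1}^\infty \lambda(A_n) \le \sum_{n=1}^\infty \alpha_n = \eps_0,
\]
which yields $\lambda(U) \ge b - a - \eps_0$, as claimed.

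For any $x \in U$ the bound $|Q_n(x) - 1| \le \alpha_n$ holds for every $n$, hence
\[
\prod_{n=1}^\infty (1 - \alpha_n) \;\le\; \prod_{n=1}^\infty Q_n(x) \;\le\; \prod_{n=1}^\infty (1 + \alpha_n) = \Pi_0.
\]
The upper deviation is immediately controlled by $\Pi_0 - 1$. For the lower deviation I would invoke the elementary inequalities $\prod_n(1-\alpha_n) \ge 1 - \sum_n \alpha_n = 1 - \eps_0$ (valid for $\eps_0 \le 1$) and $\Pi_0 \ge 1 + \eps_0$, which together give
\[
1 - \prod_n Q_n(x) \;\le\; 1 - (1 - \eps_0) = \eps_0 \;\le\; \Pi_0 - 1,
\]
so $\left|\prod_n Q_n - 1\right| < \Pi_0 - 1$ on $U$.

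The chain ends by applying $1 + \alpha_n \le e^{\alpha_n}$ termwise to conclude $\Pi_0 \le \exp(\sum_n \alpha_n) = e^{\eps_0}$. The final estimate $\Pi_0 - 1 < 3\eps_0$ under the hypothesis $\eps_0 < 1$ follows from the convexity bound $e^x \le 1 + x + x^2 \le 1 + 2x$ on $[0,1]$, or equivalently from the monotonicity of $(e^x - 1)/x$ which gives $e^{\eps_0} - 1 \le (e-1)\eps_0 < 3\eps_0$. No real obstacle is expected here; the only small subtlety is book-keeping the two-sided bound on $\prod Q_n - 1$, which is why one has to invoke $\prod(1-\alpha_n) \ge 1 - \eps_0$ rather than expecting both deviations to be symmetric.
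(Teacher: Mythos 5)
Your argument is correct, and it is evidently the intended one: the paper states Lemma~\ref{lemRieszConvRate} without proof, and your construction simply reuses the sets $A_n=\{x : |Q_n(x)-1|>\alpha_n\}$ and the Chebyshev bound $\lambda(A_n)\le\eps_n/\alpha_n=\alpha_n M^{-n}\le\alpha_n$ already established in the proof of Lemma~\ref{lemLOneConverges}, taking $U=[a,b]\sms\bigcup_n A_n$. The two-sided bookkeeping via $\prod_n(1-\alpha_n)\ge 1-\eps_0$ and $\Pi_0\ge 1+\eps_0$, and the elementary estimate $e^{\eps_0}-1\le(e-1)\eps_0<3\eps_0$ for $\eps_0<1$, are all sound.
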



Let us apply lemmas \ref{lemWeakConvergesOfSigmaN} and~\ref{lemLOneConverges} 
to the Riesz product generated by the rank one flow 
with flat polynomials $\cP_n(\tau)$. We begin with the repetition 
of the main elements of the construction. 

\begin{constr}
\label{cLROF}
Let us choose an encreasing sequence of windows expanding to the whole $\Set{R}$ 
except zero point: 
\begin{equation}
	G_{a_n,b_n} \subset G_{a_{n+1},b_{n+1}}, \qquad \bigcup_n G_{a_n,b_n} = \Set{R} \sms \{0\}, 
\end{equation}   
and for each winfow $G_{a_n,b_n}$ we let us find a~flat polynomial (see~\cite{LebesgueFlows}) 
\begin{equation}
	\cP_n(\tau) = \frac1{q_n} \sum_{y=0}^{q_n-1} e^{2\pi i\, \tau \,\om_n(y)}, \qquad 
	\om_n(y) = m_n \frac{q_n}{\beta_n^2} e^{\beta_n y/q_n}, 
\end{equation}
which should be compatible, of course, with the rank one construction, in particular, 
\begin{equation}
	h_n = \frac{m_n}{\beta_n}\,(1 + o(1)), \qquad \sum_{n=1}^\infty \frac{m_n}{h_n} < \infty. 
	\label{eFormulaForHn}
\end{equation}
We can choose $\cP_n(\tau)$ in such a way that 
\begin{gather}
	\|\cP_n(\tau)|_{G_{a_n,b_n}}\|_\infty \le M_n^{1/2}, 
	\\
	\bigl\|\, |\cP_n(\tau)|_{G_{a_n,b_n}}|^2 - 1 \,\bigr\| < \eps_n \quad \text{in $L^1(G_{a_n,b_n})$}, 
\end{gather}
and $M_n \eps_n$ goes to~$0$ as fast as it is required. Assume that ${M_n > 0}$. 
Notice that all the parameters $m_n$, $\beta_n$ and $q_n$ are choosen when $\eps_n$, $h_n$, 
the window $G_{a_n,b_n}$ and all the parameters from the previous steps are fixed 
(including $\cP_k(\tau)$, ${k = 1,\ldots,n-1}$), so that we can take an appropriate 
$m_n$ and $\beta_n$ to ``cover'' the window $G_{a_n,b_n}$ and to match $h_n$ (see~\eqref{eFormulaForHn}) 
and vary $q_n$ going far towards infinity to fit any predefined precision~$\eps_n$. 
\end{constr}

\begin{rem}
According to \cite{LebesgueFlows} the flat polynomials $\cP_n(\tau)$ in construction~\ref{cLROF} 
possesses a~universal upper bound ${M \equiv M_n}$. 
\end{rem}

\begin{lem}\label{lemSimpleLebesgueSpGen}
Suppose that the following conditions on the flat polynomials $\cP_n(\tau)$ are satisfied: 
\begin{gather}
	|\cP_n(\tau)| \le M^{1/2}, 
	\\
	\bigl\|\, |\cP_n|^2 - 1 \,\bigr\|_1 \le \eps_n 
	\quad \text{and} \quad 
	\sum_{n=1}^\infty (M^n \eps_n)^{1/2} < \infty, 
\end{gather}
whenever $\tau$ ranges over a window $G_{a_n,b_n}$, 
and ${M > 1}$ is a global constant. 
Then for any $\cB_n$-measurable bounded function $f(\x)$ 
having support in the tower $U_n$ the spectral measure $\sigma_f$ 
is absolutely continuous with respect to the Lebesgue measure on~$\Set{R}$. 
\end{lem}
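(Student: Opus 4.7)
\textit{Proof plan.}

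The strategy is to combine three ingredients that are already in place: the multiplicative recurrence $\hat R_{N+1}(\tau) = \hat R_{n_0}(\tau) \prod_{n=n_0}^N |\cP_n(\tau)|^2$ for the densities of the approximating measures $\sigma_n$, the weak convergence $\sigma_n \to \sigma$ established in Lemma~\ref{lemWeakConvOfSigmaN}, and the $L^1$-convergence of Riesz products on a window provided by Lemma~\ref{lemLOneConverges}, together with the structural decomposition supplied by Lemma~\ref{lemWeakConvergesOfSigmaN}.

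First I would fix a compact segment $[a,b] \subset \Set{R} \setminus \{0\}$ with $a>0$ (the case $b<0$ is symmetric). Because the nested windows $G_{a_n,b_n}$ exhaust $\Set{R} \setminus \{0\}$, there is an index $n_1 \ge n_0$ such that $[a,b] \subset G_{a_n,b_n}$ for every $n \ge n_1$. The initial factor $\hat R_{n_0}(\tau)$ together with the finitely many polynomial factors $|\cP_n(\tau)|^2$ for $n_0 \le n < n_1$ contributes only a fixed bounded continuous function $\Psi(\tau)$ on $[a,b]$. For the tail, I would apply Lemma~\ref{lemLOneConverges} to $Q_n := |\cP_n|^2\big|_{[a,b]}$, $n \ge n_1$: the hypotheses $Q_n \le M$ and $\|Q_n - 1\|_1 \le \eps_n$ are given, and the tail of the series $\sum_n (M^n \eps_n)^{1/2}$ remains summable. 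This yields $L^1[a,b]$-convergence of $\prod_{n \ge n_1} |\cP_n|^2$, and multiplying by the bounded function $\Psi$ gives $L^1[a,b]$-convergence of the densities $\hat R_n$ themselves.

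Next, Lemma~\ref{lemWeakConvergesOfSigmaN} applied to the sequence $\sigma_n$ (weakly convergent to $\sigma$ by Lemma~\ref{lemWeakConvOfSigmaN}, with densities convergent in $L^1$ on every window separated from zero) produces a decomposition
\begin{equation}
\sigma = \alpha\,\delta_0 + \Phi(\tau)\,d\tau, \qquad \Phi \in L^1(\Set{R}), \quad \alpha = \sigma(\{0\}).
\end{equation}
It remains to show $\alpha = 0$. Since we work in the orthogonal complement of the constants (as noted in the footnote earlier, the Koopman operator is considered on the zero-mean subspace), the ergodicity of the flow $T^t$ guarantees that $1$ is a simple eigenvalue of $\hat T^t$ with eigenspace equal to the constants; hence no non-constant $f$ can carry spectral mass at $0$, and $\alpha = 0$. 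Thus $\sigma_f = \Phi(\tau)\,d\tau$ is absolutely continuous.

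The most delicate step is the second one: verifying that the $L^1$-convergence genuinely passes through the bookkeeping between the expanding windows $G_{a_n,b_n}$ and a fixed compact $[a,b]$ chosen in advance, and that the finitely many early factors (whose sup norms on $[a,b]$ are not controlled by the global constant $M$) do not spoil the $L^1$-convergence of the infinite product. Both issues reduce to the observation that these early factors, and $\hat R_{n_0}$, are continuous on $[a,b]$ and hence uniformly bounded there, so multiplication by them is a bounded operator on $L^1[a,b]$.
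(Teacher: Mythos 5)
Your proposal is correct and follows essentially the same route as the paper's own proof: apply Lemma~\ref{lemLOneConverges} to get $L^1$-convergence of the densities $\hat R_N$ on each window, combine with the weak convergence $\sigma_n \to \sigma_f$ from Lemma~\ref{lemWeakConvOfSigmaN} via Lemma~\ref{lemWeakConvergesOfSigmaN} to obtain $\sigma_f = \alpha\,\delta_0 + \Phi\,d\tau$, and invoke ergodicity to kill the atom at zero. The only difference is that you spell out the bookkeeping between the fixed segment $[a,b]$ and the expanding windows and the boundedness of the finitely many early factors, which the paper leaves implicit.
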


\begin{proof}
On the one side, in view of lemma~\ref{lemLOneConverges} the sequence of measurable functions 
\begin{equation}
	{\Hat R}_{N+1}(\tau) = |\Hat f_{n_0}|^2 \, \prod_{n=n_0}^{N} |\cP_n(\tau)|^2 
\end{equation}
converges in $L^1(G)$ for any window $G_{a,b}$, ${a > 0}$. 
On the other side, the measures $\sigma_n$ having ${\Hat R}_n(\tau)$ as the density 
converge weakly to the spectral measure $\sigma_f$ by lemma~\ref{lemWeakConvOfSigmaN}. 
Hence, in force of lemma~\ref{lemWeakConvergesOfSigmaN}
\begin{equation}
	\sigma_f = \a\df_0 + \Phi_f(\tau)\,d\tau, 
\end{equation}
where $\Phi_f \in L^1(\Set{R})$. It follows from the ergodicity of the rank one flow $T^t$ 
that $\sigma_f$ has no atom at zero, and ${\sigma_f = \Phi_f(\tau)\,d\tau}$. 
\end{proof}

The following lemma completes the proof of theorem~\ref{thSimpleLebesgueSpR}.

\begin{lem}\label{lemAbsContRieszProduct}
The maximal spectral type $\sigma$ of the rank one flow constructed with the help 
of a sequence of flat polynomials $\cP_n(\tau)$ is Lebesgue. 
\end{lem}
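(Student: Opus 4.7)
The plan is to combine lemma~\ref{lemSimpleLebesgueSpGen} with a direct analysis of the density of $\sigma_f$ for one carefully chosen~$f$.

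First, lemma~\ref{lemSimpleLebesgueSpGen} delivers absolute continuity of $\sigma_f$ for every bounded $\cB_n$-measurable function $f$ supported in a tower~$U_n$. Such functions form a dense family in ${L^2(X,\mu) \ominus \mathbf{1}}$, so the maximal spectral type $\sigma$, which is equivalent to ${\sum_k 2^{-k}\sigma_{f_k}/\|\sigma_{f_k}\|}$ for any countable dense subfamily $\{f_k\}$, is itself absolutely continuous with respect to the Lebesgue measure $\lambda$ on~$\Set{R}$. Writing ${\rho = d\sigma/d\lambda}$, the statement reduces to showing that ${\rho > 0}$ almost everywhere.

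To that end I would fix a starting index $n_0$ and take ${f_{(n_0)}}$ to be any nonzero bounded integrable function with compact support inside ${(0,h_{n_0})}$ and with ${\int f_{(n_0)}\,dx = 0}$; its lift $f$ is then bounded, $\cB_{n_0}$-measurable, supported in~$U_{n_0}$, and orthogonal to constants. Since $f_{(n_0)}$ has compact support and is not identically zero, ${\hat f_{(n_0)}}$ is a non-trivial entire function of exponential type, its real zero set is at most countable, and consequently ${|\hat f_{(n_0)}|^2 > 0}$ almost everywhere. Combining the recurrence for ${\hat R_n}$ with lemmas~\ref{lemWeakConvOfSigmaN} and~\ref{lemSimpleLebesgueSpGen} identifies the density of $\sigma_f$ on every window ${G_{a,b}}$, ${a>0}$, as the $L^1$-limit
\begin{equation}
	\Phi_f(\tau) \;=\; {\hat R}_{n_0}(\tau)\cdot \prod_{n=n_0}^{\infty} |\cP_n(\tau)|^2.
\end{equation}

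The decisive ingredient is the almost-everywhere positivity of the infinite product, which is already built into the Borel--Cantelli component of the proof of lemma~\ref{lemLOneConverges}: outside a Lebesgue null set in $G_{a,b}$ the terminal index $h(\tau)$ is finite, so ${\bigl||\cP_n(\tau)|^2 - 1\bigr| \le \alpha_n}$ for every sufficiently large $n$ and the tail converges to a value sandwiched between ${\prod_n(1-\alpha_n) > 0}$ and ${\prod_n(1+\alpha_n) < \infty}$; each of the finitely many remaining factors is a trigonometric sum with a Lebesgue-null zero set. Hence ${\Phi_f > 0}$ almost everywhere on $\Set{R}$. Since ${\sigma_f \ll \sigma}$ with absolutely continuous densities, ${\Phi_f = h\,\rho}$ for some non-negative ${h \in L^1(\sigma)}$, which forces ${\rho > 0}$ wherever ${\Phi_f > 0}$, i.e.\ almost everywhere; this is exactly the equivalence ${\sigma \sim \lambda}$ asserted by the lemma. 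The step I expect to require the most care is the pointwise identification of the $L^1$-limit $\Phi_f$ produced by lemmas~\ref{lemLOneConverges} and~\ref{lemRieszConvRate} with the pointwise limit of the partial Riesz products controlled by Borel--Cantelli; these two descriptions of $\Phi_f$ must be verified to coincide up to a Lebesgue null set on each window, so that the a.e.\ positivity estimate really applies to the density of $\sigma_f$ and not merely to some $L^1$-representative.
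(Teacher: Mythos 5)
Your proof is correct, and the first half (absolute continuity of the maximal type via lemma~\ref{lemSimpleLebesgueSpGen} and the standard dominance $\sigma_f \ll \sigma$) coincides with what the paper does implicitly. Where you genuinely diverge is the positivity step. The paper does not fix a single function: for each segment $[a,b]$ and each $\eps>0$ it picks a starting index $n_0$ with $[a,b]\subseteq G_{a_{n_0},b_{n_0}}$ and $\sum_{n\ge n_0}\a_n<\eps$, chooses $f$ with ${\Hat f}_{(n_0)}>0$ on the whole window, and invokes the quantitative lemma~\ref{lemRieszConvRate} to conclude that $\Phi_f$ is positive on $[a,b]$ off a set of measure $<\eps$; letting $\eps\to 0$ over a family of such $f$ forces $\rho>0$ a.e. You instead fix one $f$ and obtain $\Phi_f>0$ almost everywhere directly, by combining the Borel--Cantelli structure inside the proof of lemma~\ref{lemLOneConverges} (the tail beyond $h(\tau)$ is bounded below by $\prod(1-\a_n)$) with the observation that ${\Hat f}_{(n_0)}$ and each $\cP_n$ extend to nonzero entire functions and hence have countable real zero sets. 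Your route buys something the paper's does not state: a single spectral measure equivalent to Lebesgue, rather than a family exhausting positivity in the limit; the paper's route avoids any appeal to analyticity of the factors, using only the quantitative flatness data. Two small points to tighten in your version: (i) the lower bound $\prod_n(1-\a_n)>0$ requires $\a_n<1$, which holds only for large $n$, so the tail should be started past such an index and the intermediate factors absorbed into the ``finitely many a.e.\ positive'' head; (ii) the identification of the $L^1$-limit with the pointwise limit, which you rightly flag, is in fact already settled inside the proof of lemma~\ref{lemLOneConverges}, since the $L^1$ convergence there is obtained by dominated convergence from the pointwise a.e.\ convergence, so the two limits agree off a null set on each window.
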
 

\begin{proof}
It is enough to prove 
(in addition to the statement of lemma~\ref{lemAbsContRieszProduct}) 
that for any segment $[a,b]$ and ${\eps > 0}$ 
there exists a spectral measure ${\sigma_f = \Phi_f(\tau)\,d\tau}$ 
for the flow $T^t$ such that 
\begin{equation}
	\la\bigl( \{ \tau \in [a,b] \where \Phi_f(\tau) = 0 \} \bigr) < \eps. 
\end{equation}
Let us find $n_0$ and a function $f(\x)$ satisfying the following requirements:
\begin{gather}
	[a,b] \subseteq G_{a_n,b_n}, \qquad n \ge n_0, 
	\\ 
	\sum_{n=n_0}^\infty \a_n < \eps   \quad \text{with} \quad   \a_n^2 = M^n \eps_n, 
	\\
	\forall\,\tau \in G_{a_{n_0},b_{n_0}} \quad {\Hat f}_{(n)}(\tau) > 0. 
\end{gather}
Then the density of the measure $\sigma_f$ is positive on a~set of measure 
at least ${b-a-\eps}$. 
\end{proof}

\section{$\Set{R}^d$-Actions} 

In this section we extend the construction of rank one flow with simple Lebesgue spectrum 
to the case of rank one $\Set{R}^d$-actions. 

\subsection{Beginning illustration: tensor square $T^t \times T^s$} 

We would like to start with a simple illustration that helps to see 
what kind of effects we need overcome when passing to the multi-dimensional case. 
Let us assume that ${d = 2}$ and consider for a given rank one flow 
${\T = \{T^t \where t \in \Set{R}\}}$ acting on the space $(X,\mu)$ 
its tensor product ${ \T \otimes \T = \{T^t \times T^s \where (t,s) \in \Set{R}^2\} }$ 
acting on $(X \times X,\mu \times \mu)$, 
\begin{equation}
	(T^t \times T^s)(x,y) = (T^t x, T^sy). 
\end{equation}

On the one hand, the space $L^2(X \times X,\mu \times \mu)$ splits into the four invariant spaces: 
\begin{equation} 
	L^2(X \times X,\mu \times \mu) = \{\const\} \oplus 
		(\{\const\} \otimes H) \oplus (H \otimes \{\const\}) \oplus (H \otimes H), 
\end{equation} 
where ${H = \{f \in L^2(X,\mu) \where \int f \,d\mu = 0\}}$ is the subspace of functions with zero mean. 
Thus, whenever $T^t$ has Lebesgue spectrum of multiplicity one, 
its tensor square will also have simple spectrum, but its spectral type $\sigma^{(2)}$ 
will be the superposition of the Lebesgue%
\footnote{
	For the correctness we should speak about a class of finite measures 
	equivalent to the Lebesgue measure.
	} 
measure $\la^{(2)}$ on~$\Set{R}$ in the subspace $H \otimes H$ and two singular components: 
the Lebesgue measure on $X$-axis ${\df_0(\tau_2)= \la \times \df_0}$ (in $H \otimes \{\const\}$) 
and the Lebesgue measure on $Y$-axis ${\df_0(\tau_1)= \df_0 \times \la}$ (in $\{\const\} \otimes H$), 
and, of course, an~atom $\df_{(0,0)}$ corresponding the subspace $\{\const\}$. 

On the other hand, let us remark that $\T \otimes \T$ can be viewed as a result 
the cutting-and-stacking construction for rank one actions of the group $\Set{R}^2$ 
involving the~sequence of towers $U_n$ 
having square shape $I_n \times I_n$, where ${I_n = [0,h_n]}$. 
And it is interesting to find an iterpretation of the appearance 
of the singular part of~$\sigma^{(2)}$ in terms of Riesz products. 
This question helps understand better 
the structure of the Riesz product 
generated by a sequence of flat polynomials in the one-dimensional case. 

\begin{figure}[th]
	\centering
  \unitlength=1mm
  \includegraphics[height=11cm]{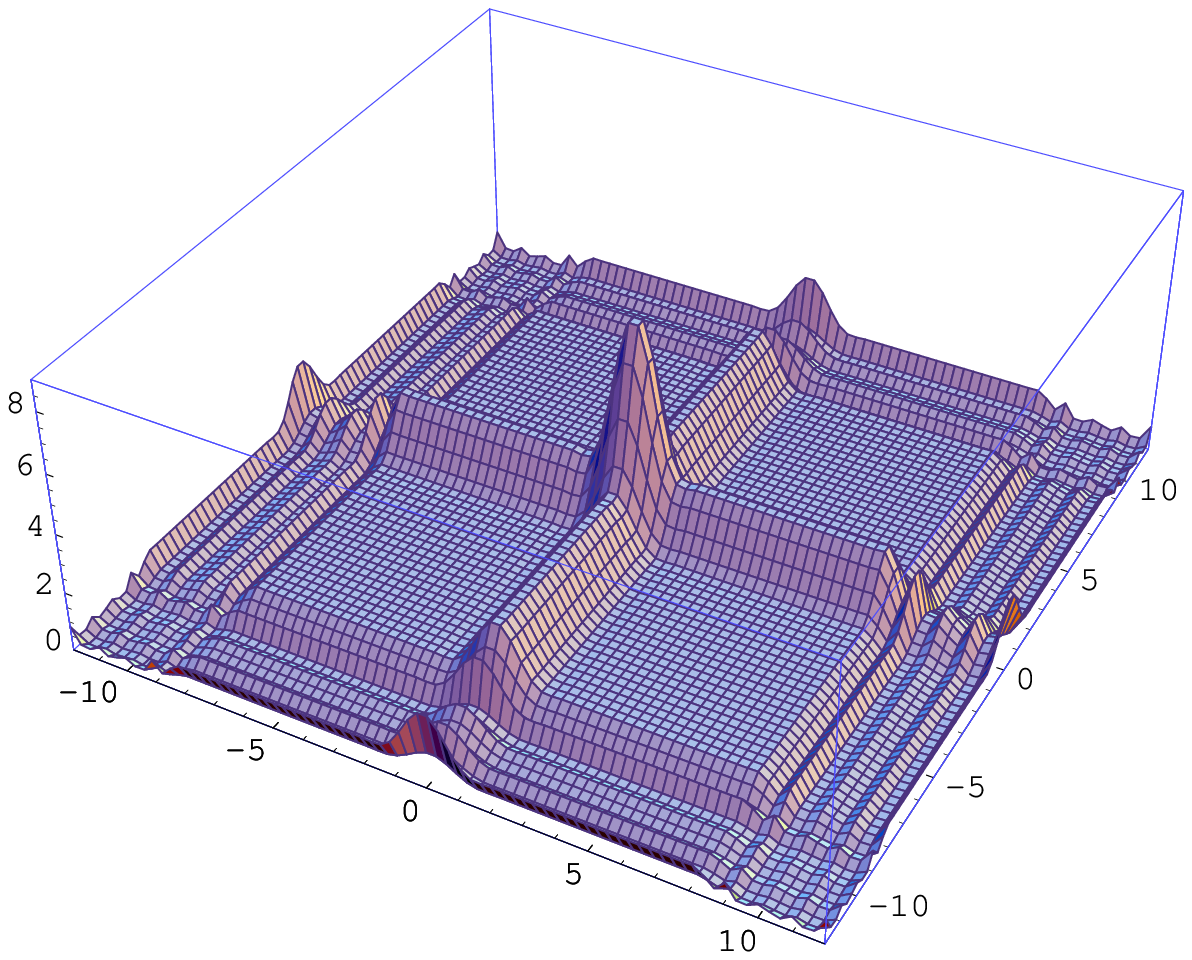} 
  \caption{Polynomials $\cP_n(\tau)$ for $\T \otimes \T$} 
  \label{fIcebergAndRank}
\end{figure}

For this purpose consider a trigonometric sum 
\begin{gather}
	\cP(\tau_1,\tau_2) = \cP_X(\tau_1)\,\cP_Y(\tau_2), 
	\\
	\cP_X(\tau_1) = \frac1{\sqrt{q}} \sum_{y_1=0}^{q-1} e^{2\pi i\, \tau_1\omega_X(y_1)}, \qquad 
	\cP_Y(\tau_2) = \frac1{\sqrt{q}} \sum_{y_2=0}^{q-1} e^{2\pi i\, \tau_2\omega_Y(y_2)}, 
	\\
	\omega_X(y) = \omega_Y(y) = m\frac{q}{\beta^2} e^{\beta y/q}. 
\end{gather}
for a single step in the rank one construction (for simplicity we omit index~$n$). 
Clearly, ${\cP = \cP_X \otimes \cP_Y}$ is just a tensor square of $\cP_X$. 
We can represent $\cP(\tau_1,\tau_2)$ in the following invariant form: 
\begin{equation}
	\cP(\tau) = \frac1{q} \sum_{y_1=0}^{q-1} \sum_{y_2=0}^{q-1} 
		e^{2\pi i\, \scpr<\tau,\omega(y)>}, 
\end{equation}
where ${\tau = (\tau_1,\tau_2)}$, ${y = (y_1,y_2)}$, and 
\begin{gather}
	\scpr<\tau,\omega(y)> = \tau_1\om_X(y) + \tau_2\om_Y(y), 
	\\
	\omega(y) = \mu\frac{q}{\beta^2} (e^{\beta y_1/q}, e^{\beta y_2/q}). 
	\label{eToDimFrequencyF}
\end{gather}
The following lemma directly follows from the one-dimensional one. 

\begin{lem}
Consider a window in $\Set{R}^2$ 
\begin{equation}
	G = \bigl( (-b,-a) \cup (a,b) \bigr) \times \bigl( (-b,-a) \cup (a,b) \bigr).
\end{equation}
For any ${\eps > 0}$ we can find $L^1$-$\eps$-flat sums in $G$ with a frequency function
$\om$ given by equation~\eqref{eToDimFrequencyF}. 
\end{lem}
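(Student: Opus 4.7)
The plan is to exploit the product structure $\cP(\tau_1,\tau_2) = \cP_X(\tau_1)\cP_Y(\tau_2)$ and reduce the two-dimensional flatness to the one-dimensional case that has already been settled in Theorem~\ref{thOnFlatPolyR} (in the explicit form of Theorem~\ref{thOnFlatPolyRExplicit}). Write $G_1 = (-b,-a)\cup(a,b)$ so that $G = G_1 \times G_1$. First I invoke Theorem~\ref{thOnFlatPolyRExplicit} with window $G_1$ and a precision $\eps' > 0$ to be fixed later: this yields parameters $m$, $\beta$, $q$ (with $q$ drawn from the admissible sequence $\bar q$) for which the polynomial $\cP_q(\tau) = q^{-1/2}\sum_{y=0}^{q-1} e^{2\pi i\tau\om(y)}$ with $\om(y) = m(q/\beta^2)e^{\beta y/q}$ is $L^1$-$\eps'$-flat on $G_1$ and satisfies a uniform bound $|\cP_q(\tau)| \le M$ on $G_1$.

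Second, I take $\cP_X = \cP_Y = \cP_q$ with identical parameters. The corresponding two-dimensional frequency then coincides with the one prescribed in~\eqref{eToDimFrequencyF}, namely $\omega(y_1,y_2) = m(q/\beta^2)(e^{\beta y_1/q}, e^{\beta y_2/q})$. Setting $u(\tau_1) = |\cP_X(\tau_1)|$ and $v(\tau_2) = |\cP_Y(\tau_2)|$, so that $|\cP(\tau_1,\tau_2)| = u(\tau_1)v(\tau_2)$, the elementary identity $uv - 1 = (u-1)(v-1) + (u-1) + (v-1)$ combined with Fubini gives
\begin{equation*}
\bigl\|\,|\cP|-1\,\bigr\|_{L^1(G)} \;\le\; \|u-1\|_{L^1(G_1)}\,\|v-1\|_{L^1(G_1)} \;+\; |G_1|\,\bigl(\|u-1\|_{L^1(G_1)} + \|v-1\|_{L^1(G_1)}\bigr),
\end{equation*}
and since $|G_1| = 2(b-a)$, the one-dimensional estimate bounds this by $\eps'^2 + 4(b-a)\eps'$. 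Choosing $\eps'$ sufficiently small in terms of $\eps$ and $b-a$ makes the right-hand side less than $\eps$, which proves the desired $L^1$-flatness on $G$.

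There is essentially no obstacle in this argument: the tensor-product structure converts the problem into two independent one-dimensional statements, and the only thing to check is that a single choice of $m$, $\beta$, $q$ can serve for both factors, which is immediate from the constructive nature of Theorem~\ref{thOnFlatPolyRExplicit}. The uniform bound $|\cP(\tau)| \le M^2$ on $G$ also follows for free from the factorization, which will be useful downstream. The real difficulty, of course, lies further on: product polynomials only realize very special frequency functions on $\Set{R}^2$, and to handle genuinely two-dimensional rank-one $\Set{R}^d$-constructions one must later pass from this tensored case to non-product flat sums via the analytic machinery announced in the introduction.
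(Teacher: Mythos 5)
Your proposal is correct and matches the paper's intent: the paper offers no written proof beyond the remark that the lemma ``directly follows from the one-dimensional one,'' and your argument --- factoring $|\cP| = u\cdot v$, applying Theorem~\ref{thOnFlatPolyRExplicit} to each factor with a common choice of $m$, $\beta$, $q$, and using the identity $uv-1=(u-1)(v-1)+(u-1)+(v-1)$ with Fubini to get the bound $\eps'^2+4(b-a)\eps'$ --- is exactly the missing detail of that reduction. No discrepancy to report.
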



Following the terminology introduced by A.\,Danilenko (see~\cite{DanilenkoOnFunnyRankOneWMix}) 
we define our rank one action of the group $\Set{R}^2$ to be the rank one action 
given by a $(C,F)$-construction, where at $n$-th step of the construction: 
$C_n$ is an open set in $\Set{R}^2$ (analogue of the tower) and $F_n$ is a finite set. 
We set precisely 
\begin{gather}
	C_n = (0,h_n) \times (0,h_n), 
	\\ 
	F_n = \bigl\{ \bigl(\om_X(y_1,y_2),\om_Y(y_1,y_2)\bigr) \where (y_1,y_2) \in \{0,1,\dots,q_n-1\}^{\times 2} \bigr\}. 
\end{gather}

\begin{lem}
Given two functions $f_1$ and $f_2$ with zero mean the spectral measure 
${\sigma_{f_1 \otimes f_2}}$ with respect to the action $\T \otimes \T$ in the space ${H \otimes H}$
is absolutely continuous with respect to the Lebesgue measure. 
And the measure of maximal spectral type for $\T \otimes \T$ 
in the space $H^{(2)}$ of all functions with zero mean is~equivalent to the sum of the Lebesgue measure 
and a singular measure with support on two coordinate axes, 
\begin{equation}
	\sigma \sim \la + \df_0(\tau_1) + \df_0(\tau_2). 
\end{equation}
\end{lem}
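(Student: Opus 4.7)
The plan is to exploit the tensor product structure of $\T\otimes\T$ together with the conclusion of Theorem~\ref{thSimpleLebesgueSpR}. I would begin with the orthogonal splitting $L^2(X\times X)=\{\const\}\oplus(H\otimes\{\const\})\oplus(\{\const\}\otimes H)\oplus(H\otimes H)$ into $\T\otimes\T$-invariant subspaces, reducing the question to analysing the spectral type on each summand separately. For a decomposable vector $f_1\otimes f_2$ with $f_1,f_2\in H$ the correlation function factorises,
\begin{equation}
R_{f_1\otimes f_2}(t,s)=\scpr<(T^t\otimes T^s)(f_1\otimes f_2),\,f_1\otimes f_2>=R_{f_1}(t)\cdot R_{f_2}(s),
\end{equation}
so the two-dimensional Fourier transform identifies the spectral measure as the product measure $\sigma_{f_1\otimes f_2}=\sigma_{f_1}\otimes\sigma_{f_2}$ on $\Set{R}^2$. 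Theorem~\ref{thSimpleLebesgueSpR} asserts that each $\sigma_{f_i}$ is absolutely continuous on $\Set{R}$, and the product of two absolutely continuous measures is absolutely continuous with respect to $\la^{(2)}$, which settles the first assertion of the lemma. Since tensor vectors $f_1\otimes f_2$ with $\sigma_{f_i}\sim\la$ realise the maximal type on $H\otimes H$, this summand contributes a component equivalent to $\la^{(2)}$.

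On the ``mixed'' summand $H\otimes\{\const\}$ only $T^t\otimes\mathrm{Id}$ acts nontrivially, so the corresponding spectral measure viewed on $\widehat{\Set{R}^2}$ is supported on the axis $\{\tau_2=0\}$, and along that axis it coincides (by the simple Lebesgue spectrum of $T$) with Lebesgue measure; this contributes the singular summand $\df_0(\tau_2)$. Symmetrically $\{\const\}\otimes H$ contributes $\df_0(\tau_1)$. Adding the three components yields the claimed equivalence $\sigma\sim\la+\df_0(\tau_1)+\df_0(\tau_2)$.

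To relate this picture to the Riesz product framework developed earlier, I would note that $|\cP_n(\tau_1,\tau_2)|^2=|\cP_{X,n}(\tau_1)|^2\cdot|\cP_{Y,n}(\tau_2)|^2$, so the two-dimensional Riesz product factorises; its convergence in $L^1(G\times G)$ on any product window $G_{a,b}\times G_{a,b}$ with $a>0$ follows from the one-dimensional lemma~\ref{lemLOneConverges}, and a two-dimensional variant of lemma~\ref{lemWeakConvergesOfSigmaN} pins down the limit up to a singular component on the complement of all such product windows. The main conceptual point, and the reason the tensor action carries a genuine singular part, is that this ``escape set'' is now the pair of axes $\{\tau_1=0\}\cup\{\tau_2=0\}$, no longer the isolated point $\{0\}$ seen in the one-dimensional case. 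The mass accumulating on these axes is precisely the contribution of the two degenerate summands $H\otimes\{\const\}$ and $\{\const\}\otimes H$, matching the direct-sum decomposition and motivating the more careful analysis needed for general $\Set{R}^d$-actions, where the escape set becomes a full collection of coordinate hyperplanes.
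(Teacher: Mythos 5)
Your argument is correct and follows essentially the same route as the paper: the orthogonal splitting of $L^2(X\times X,\mu\times\mu)$ into the four invariant subspaces, the factorisation $\sigma_{f_1\otimes f_2}=\sigma_{f_1}\otimes\sigma_{f_2}$ for decomposable vectors combined with the absolute continuity of each one-dimensional spectral measure, and the identification of the mixed summands $H\otimes\{\const\}$ and $\{\const\}\otimes H$ with the singular components $\la\times\df_0$ and $\df_0\times\la$ supported on the coordinate axes. Your closing remark on the Riesz-product picture, with the escape set growing from the single point $\{0\}$ to the union of the two axes, likewise matches the paper's own motivating discussion.
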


The serious difference between dimension one and dimension ${d > 2}$ is that 
the trivial subspace of constants in $L^2(X)$ produces a non-trivial 
subspace ${\{\const\} \times H}$ in~$H^{(2)}$. 

%

\subsection{$\Set{R}^d$-actions with simple Lebesgue spectrum} 

\if0=1
\begin{figure}[th]
	\centering
  \unitlength=1mm
  \includegraphics{bII_gr4.eps} 
  \caption{Riesz product} 
  \label{fStrOfRieszProd}
\end{figure}
\fi

\begin{figure}[th]
	\centering
  \unitlength=1mm
  \includegraphics[height=9cm]{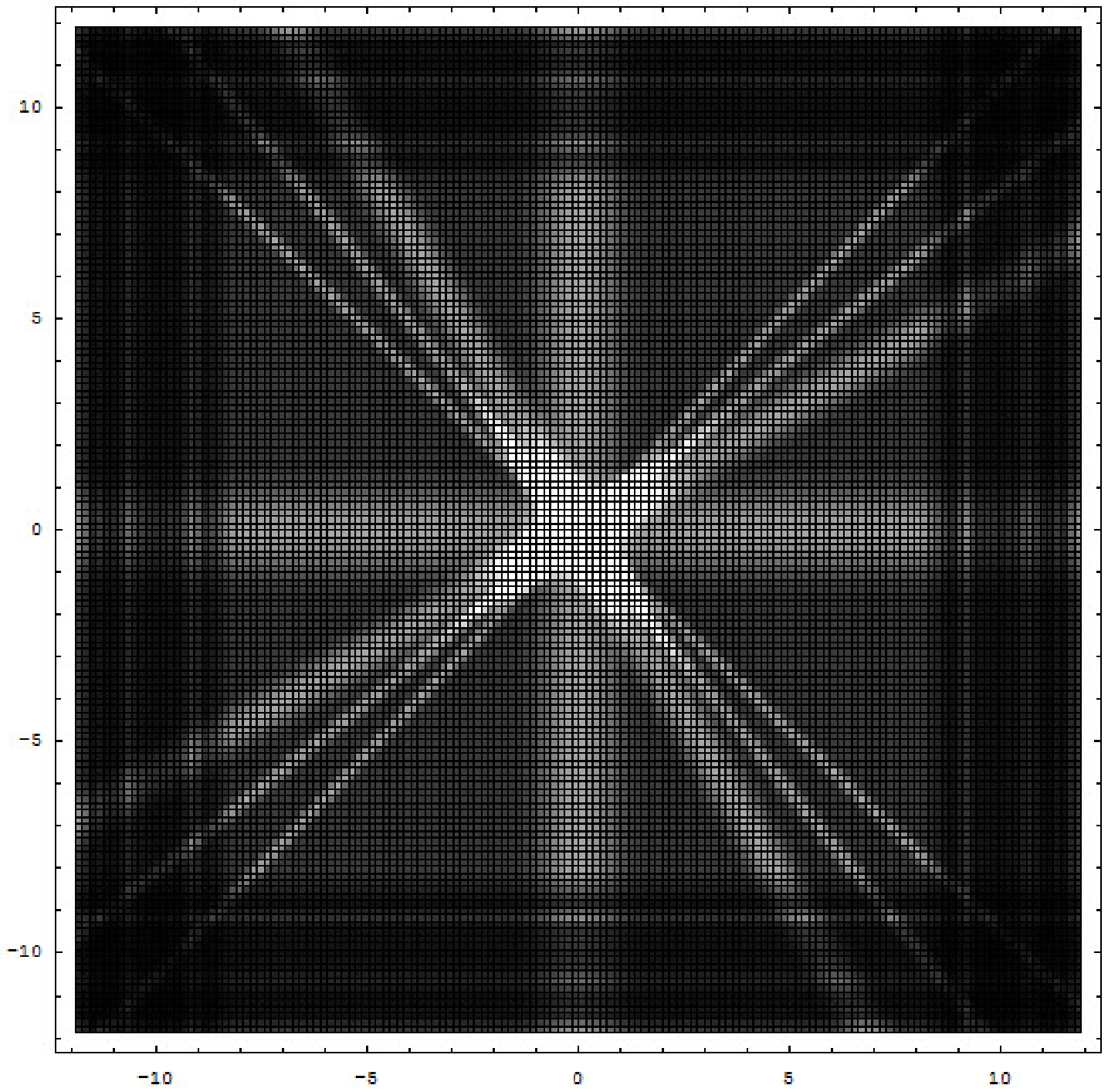} 
  \caption{The structure of the Riesz product (density plot of $\prod_n \cP_n(\tau)$ graph)} 
  \label{fStrOfRieszProdDPlot}
\end{figure}

In the above discussion we have described the obstacle to simple Lebesgue spectrum for 
the frequency function ${\omega(y_1,y_2) = q\beta^{-2}(e^{\beta y_1/q}, e^{\beta y_2/q})}$. 
Two~components $\df(x_1)$ and $\df(x_2)$ appear in addition to Lebesgue component. 


The idea of the next construction is to overcome this obstacle by choosing a~sequence 
of {\em different\/} coordinate systems given by some linear transforms $\Psi_n$ 
in such a~way that at any step of the construction the polynomials $\cP_n(\tau)$ is rotated 
on the plane $\Set{R}^2$ according the transform $\Psi_n$, 
\begin{equation}
	\Phi(\tau) = \prod_n \cP_n(\Psi_n \tau) 
\end{equation}
and the ``bad set'' 
\begin{equation}
	G_n^c = \Set{R}^2 \sms G_n = 
		(\Set{R}^2 \sms [-b_n,b_n]^{\times 2}) \cup 
		([-b_n,b_n] \times [-a_n,a_n]) \cup 
		([-a_n,a_n] \times [-b_n,b_n]) 
\end{equation}
for the polynomial $\cP_n(\tau)$ after applying $\Psi_n^{-1}$ 
is covered (in~most part) by the windows 
\begin{equation}
	\Psi_{n+1}^{-1} G_{n+1}, \quad \Psi_{n+2}^{-1} G_{n+2}, \quad \ldots 
\end{equation}
In other words, the part of the Riesz product $\prod_{k\:<\:n} |\cP_k(\tau)|^2$ 
which is {\em not necessary flat\/} is covered by the area of flatness 
of the polynomials $\cP_{n+k}(\Psi_{n+k}\tau)$, ${k \ge 1}$, 
so that it becomes in a sense ``frozen'' for all further steps of the construction. 

Remark that $G_n^c$ consists of the area outside a~big square 
$[-b_n,b_n]^{\times 2}$ and two thin rectangles 
\begin{equation}
	\cF_n = 
		([-b_n,b_n] \times [-a_n,a_n]) \cup 
		([-a_n,a_n] \times [-b_n,b_n]) 
\end{equation}
located near coordinate axes 
(recall that ${b_n \to \infty}$ and ${a_n \to 0}$ as ${n \to \infty}$).
We will find appropriate $\Psi_n$ to achieve the following effect: 
\begin{equation}
	\max\{ |\tau| \where \tau \in \Psi_n^{-1}\cF_{n} \cap \Psi_{n+k}^{-1}\cF_{n+k} \} \to 0 
	\quad \text{as $n \to \infty$}, \qquad k \ge 1
\end{equation}

\begin{figure}[th]
  \centering
  \unitlength=1mm
  \includegraphics{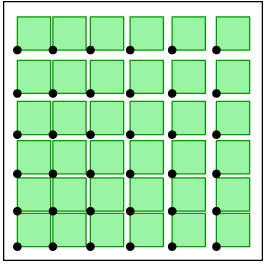} 
  \caption{Illustration for the set $F_1$. 
  	The small squares are shifts of the set $C_1$, 
  	and the large bounding square is the set $C_2$. 
  	One can see that the distance between adjacent points slightly increases from the left to the right
  	and from the bottom to the top.} 
  \label{fRankOneFlowFnA}
\end{figure}

\begin{figure}[th]
  \centering
  \unitlength=1mm
  \includegraphics{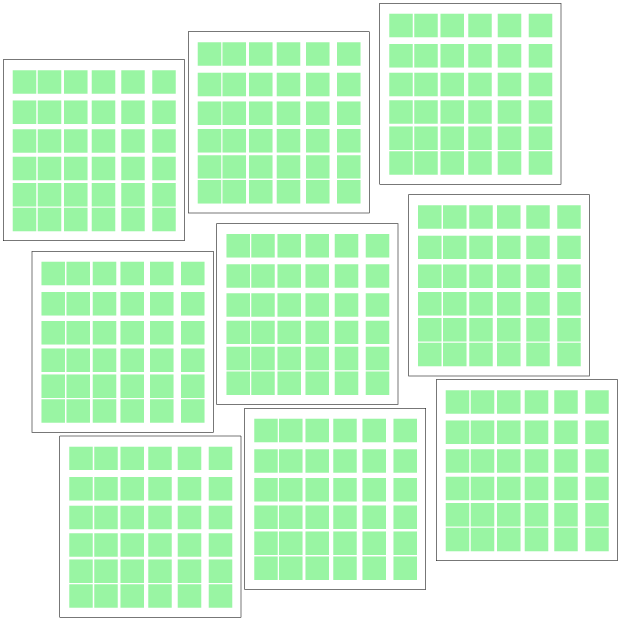} 
  \caption{Illustration to construction~\ref{cRActionLS}, the second step.} 
  \label{fRankOneFlowFnA}
\end{figure}

\def\ee{\Hat e}

\begin{constr}\label{cRActionLS}
Assume that for each index $n$ a pair of basis vectors $\{\ee_1(n),\ee_2(n)\}$ is given 
such that 
\begin{gather}
	|\ee_1(n)| = |\ee_2(n)| = 1, \\
	\ee_1(n) = \Psi_n \begin{pmatrix} 1 \\ 0 \end{pmatrix}, \qquad
	\ee_1(n) = \Psi_n \begin{pmatrix} 0 \\ 1 \end{pmatrix}, 
\end{gather}
and let $\Psi_{n+1} = V_n\Psi_n$ be a small linear correction of $\Psi_n$ 
by a~map~$V_n$ to be defined later. 
%
%
Set 
\begin{equation}
	F_n = \bigl\{ 
		\om_{X,n}(y_1)\,\ee_1(n) + \om_{Y,n}(y_2)\,\ee_2(n) \where y \in \{0,\dots,q_n-1\}^{\times 2} 
	\bigr\}
\end{equation}
Let us define $\ee_1(n)$ and $\ee_2(n)$ as follows. Imagine that everuthing is consider 
in the coordinate system connected to the original one via the transform $\Psi_n$. 
In this coordinate system $\ee_j(n)$ becomes $(1,0)$ and $(0,1)$ 
and $\Psi_n^{-1} F_n$ is exactly the non-perturbed set $F_n$ 
comming from the beginning example of $\T \otimes \T$. 
It~looks like a~rectangular grid ${F_n^{(1)} \times F_n^{(1)}}$ 
so that the adjacent points in this set connected by a~vector ${v \approx (h_n,0)}$ 
or ${v \approx (0,h_n)}$. 
Now let us set
\begin{gather}
	h_{n+1}\, \ee_1(n+1) = \ell_n \ee_1(n) + \xi_n \ell_n e_2(n), \quad  
	h_{n+1}\, \ee_2(n+1) = -\xi_n \ell_n e_2(n) + \ell_n e_1(n), 
	\\ 
	\ell_n = q_n h_n e^{\beta_n}. 
\end{gather}
(Notice that for the original non-perturbed construction of $\T \otimes \T$ we would have ${\xi_n = 0}$.) 

Thus, one can get the following representation for the elementary rotation on each step:
\begin{equation}
	V_n = 
	\begin{pmatrix}
		1 & -\xi_n \\
		\xi_n & 1
	\end{pmatrix}. 
\end{equation}
Such kind of cunstruction in the context of $\Set{Z}$-actions was used by V.\,Ryzhikov in~\cite{RyzhikovOnRHLWEps}. 
The spectral measure $\sigma_f$ can be represented in a form of a Riesz product, 
which is formal so far, and our purpose is to prove its convergence: 
\begin{equation}
	\sigma_n = |f_{(n_0)}|^2 \, \prod_{n=n_0}^\infty |\cP_n(\tau)|^2 \,d\tau, \qquad 
	\cP_n = \frac1{q_n}\,\Hat{\bf 1}_{F_n}, 
\end{equation}
where 
\begin{equation}
	{\bf 1}_{F_n} 
		= \sum_{t \:\in\: F_n} \df_t 
		= \sum_{y_1=0}^{q_n-1} \sum_{y_2=0}^{q_n-1} \df_{\om(y_1,y_2)\Psi^{-1}_n}, 
\end{equation}
According to this modification applied to $F_n$ let us define $C_n$ to be the open set 
$\Psi_n((0,h_n) \times (0,h_n))$. 
\end{constr}

\begin{thm}\label{thmSimpleLebSpRd}
The rank one $\Set{R}^2$-action built in construction~\ref{cRActionLS} has simple Lebesgue spectrum 
if $q_n$ go~to~infinity sufficiently fast. 
\end{thm}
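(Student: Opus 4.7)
The plan is to parallel the one-dimensional proof of theorem~\ref{thSimpleLebesgueSpR}, replacing one-variable Riesz products by two-variable ones; the new analytic input will be $L^1$-convergence of $\prod_n |\cP_n(\tau)|^2$ on every compact set $K \subset \Set{R}^2 \sms \{0\}$. Simplicity of the spectrum is automatic from the rank-one nature of the construction, so the main task is to prove that the maximal spectral type is equivalent to the Lebesgue measure on~$\Set{R}^2$. For a $\cB_{n_0}$-measurable function $f$ supported in the tower $U_{n_0}$, I would set $R(t) = \scpr<T^t f, f>$ for $t \in \Set{R}^2$, define tower-level approximants $R_n(t)$, and consider the probability densities ${\Hat R}_n = (1-\gamma_n)|C_n|^{-1}|\Hat f_{(n)}|^2$. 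The lemmas leading to lemma~\ref{lemWeakConvOfSigmaN} go through verbatim in two dimensions: $R_n(t) \to R(t)$ pointwise, $R$ is continuous at~$0$ by measurability of the action, and Levy's lemma yields weak convergence $\sigma_n \to \sigma_f$. The recurrence ${\Hat R}_{n+1} = {\Hat R}_n \cdot |\cP_n(\tau)|^2$ extends verbatim, with $\cP_n(\tau) = q_n^{-1}\sum_{y\in F_n} e^{2\pi i \scpr<\tau,y>}$; in the rotated coordinates in which $F_n$ is a rectangular grid, $\cP_n$ is exactly the tensor square of a 1D flat polynomial of theorem~\ref{thOnFlatPolyRExplicit}, and hence is $L^1$-flat on the rotated window $\Psi_n^{-1} G_n$ with $G_n = G_{a_n,b_n} \times G_{a_n,b_n}$.

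The heart of the proof is then the $L^1$-convergence of the Riesz product on any compact $K \subset \Set{R}^2 \sms \{0\}$. The obstacle is that $\cP_n$ is flat only on $\Psi_n^{-1} G_n$; on the two rotated thin strips $\Psi_n^{-1} \cF_n$ (and in the far region, which contains $K$ in only finitely many levels), $\cP_n$ is merely uniformly bounded by~$M^{1/2}$. The key geometric fact designed into construction~\ref{cRActionLS} is that the strips $\Psi_n^{-1} \cF_n$, being rotated by mutually distinct angles, satisfy $\mathrm{diam}(\Psi_n^{-1} \cF_n \cap \Psi_{n+k}^{-1} \cF_{n+k}) \to 0$ as $n \to \infty$, uniformly in $k \ge 1$. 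Choosing the angles $\xi_n$ decaying slowly enough relative to the widths $a_n$, I would show that for any fixed compact $K$ bounded away from~$0$ one has $\sum_n \la(K \cap \Psi_n^{-1} \cF_n) < \infty$, so that by Borel--Cantelli almost every $\tau \in K$ lies in $\Psi_n^{-1} \cF_n$ for only finitely many~$n$. Combining this with the uniform bound on the finitely many ``early'' exceptional factors and the $L^1$-flatness of $\cP_n$ on $\Psi_n^{-1} G_n$ for the ``late'' ones, the proof of lemma~\ref{lemLOneConverges} adapts and yields $L^1(K)$ convergence, provided $q_n$ (hence $\eps_n$) is taken fast enough that $\sum_n (M^n \eps_n)^{1/2} < \infty$.

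Once the product converges in $L^1_{\mathrm{loc}}(\Set{R}^2 \sms \{0\})$, a direct adaptation of lemma~\ref{lemWeakConvergesOfSigmaN} in dimension two gives $\sigma_f = \a\df_0 + \Phi_f(\tau)\,d\tau$. The atom at zero is excluded by ergodicity on the subspace of functions with zero mean, exactly as in lemma~\ref{lemAbsContRieszProduct}, and by varying $f$ so that $\Hat f_{(n_0)}$ is positive on a prescribed compact set one obtains $\sigma_f \sim \la$ there. Crucially, the parasitic singular components $\df_0(\tau_1)\,d\tau_2$ and $\df_0(\tau_2)\,d\tau_1$ that appear in the unperturbed tensor-square case do not reappear here: the $L^1$-convergence on every compact subset of $\Set{R}^2 \sms \{0\}$ automatically forbids singular mass on lines through the origin. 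The main obstacle I anticipate is the balancing act behind the middle step: one must choose $\xi_n$, $a_n$, $b_n$ and $q_n$ so that both the geometric dispersion of the bad strips and the summability $\sum_n (M^n \eps_n)^{1/2} < \infty$ hold simultaneously, while also respecting the rank-one compatibility conditions of construction~\ref{cLROF}.
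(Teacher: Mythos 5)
Your overall strategy matches the paper's: weak convergence of $\sigma_n$ to $\sigma_f$ via Levy's lemma, the two-dimensional Riesz product recurrence, dispersal of the bad strips by the rotations $\Psi_n$, and ergodicity to kill the atom at the origin. But there is a genuine gap at the heart of your $L^1$-convergence argument. You treat the rotated strips $\Psi_n^{-1}\cF_n$ as sets where $\cP_n$ is ``merely uniformly bounded by $M^{1/2}$.'' That is false. In the rotated coordinates $\cP_n=\cP_X\otimes\cP_Y$ is a tensor product, and on the strip $\{|\tau_2|\le a_n\}$ the factor $\cP_Y(\tau_2)$ is close to its value at the origin, $\cP_Y(0)=\sqrt{q_n}$; so on $\cF_n$ one only has $|\cP_n|\le M^{1/2}\sqrt{q_n}$, which blows up with $n$. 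Consequently your Borel--Cantelli/majorant adaptation of lemma~\ref{lemLOneConverges} does not close: on the set of points whose last bad index is $n$ the natural majorant is not $\Pi_0 M^n$ but involves the factors $q_k$, and since each $q_n$ must be chosen \emph{after} $a_n$ (flatness on the window $G_{a_n,b_n}$ forces $q_n$ large depending on $a_n$ and $\eps_n$), you cannot make $\la(K\cap\Psi_n^{-1}\cF_n)\cdot\sup_{\cF_n}|\cP_n|^2$ summable simply by shrinking $a_n$.

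The paper closes exactly this hole with lemma~\ref{lemLOneEstNearZero}: although $\cP$ is of size $\sqrt{q}$ near zero, its $L^1$ mass there is only $O(\log(aq)/\sqrt{q})$, so that $\sum_n\|\cP_n|_{U_0\cap\cF_n}\|_1<\infty$ on a neighbourhood $U_0$ of a limit line. This estimate is usable only because the strips $\Psi_n^{-1}\cF_n$ are pairwise disjoint away from balls about the origin of radius $r_{n,m}=O((a_n+a_m)/\varphi_{n,m})$ shrinking to zero (the paper's distinction between cases (a), (b) and (c)): at a given point away from the origin at most one factor of the product is in its ``strip'' regime while the others are flat, so the $L^1$ smallness of the single bad factor controls the whole product; the paper explicitly warns that this calculation breaks down where two strips overlap. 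Your closing remark that $L^1$-convergence on compacta of $\Set{R}^2\sms\{0\}$ ``automatically forbids'' singular mass on the limit lines is the right conclusion but assumes the point at issue, since convergence near those lines is precisely what requires the extra estimate. To repair the argument you should (i) replace the claimed uniform bound on the strips by the $L^1$ estimate of lemma~\ref{lemLOneEstNearZero}, and (ii) invoke the disjointness of the strips outside the shrinking balls so that the strip contributions can be handled one factor at a time.
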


\begin{proof}
It is enough to establish that $\sigma_f$ is absolutely continuous for any $f$ constant 
on the levels of the $n_0$'th tower. If we compare this action with the construction 
of rank one flow described above all the points on the plane can be classified to the 
following three groups: 
\begin{itemize}
	\item[(a)] points covered by many (more than one) sets $\cF_n$, 
		where flatness of~$\cP_n(\tau)$ cannot be controlled;
	\item[(b)] points containing infinitely many $\cF_n$ entering its arbitrary small boundary;
	\item[(c)] points with a boundary free from points from $\cF_n$ starting from some $n^*$; 
\end{itemize}
Notice that the case (b) is not present for the flow, but in the case of the $\Set{R}^2$-action 
the case (b) is observed on the limit set of $\cF_n$ which is non-empty and 
consists of two {\it limit lines\/} crossing at zero (see~figure~\ref{fStrOfRieszProdDPlot}). 
It can be easily seen that the intersection of $\cF_n$ and $\cF_m$ can be fit in a ball 
of radius 
\begin{equation}
	r_{n,m} = O\!\left( \frac{a_n + a_m}{\varphi_{n,m}} \right), 
\end{equation}
where 
$a_n$ is the thickness of the rectangular strips in~$\cF_n$, and $\varphi_{n,m}$ is 
the angle between them, 
\begin{equation}
	\varphi_{n,m} \sim \xi_n + \xi_{n+1} +\ldots+ \xi_{m-1} \le \sum_{k \ge n} \xi_k 
\end{equation}
if ${n < m}$. Since $\xi_n$ are fixed in advance, before we construct the sequence 
of flat polynomials, we have to require the following condition:
\begin{equation}
	a_n \ll \sum_{k \ge n} \xi_k 
\end{equation}
to ensure that all these intersections collapses to the zero point, so that we can apply 
the same arguments like in the case of rank one flow and see that atom at zero 
in the limit distribution prohibited by the ergodicity of the actions. 

At the same time, case (b) appear only in the multidimensional case 
and to complete the proof we have to use the following lemma on 
the polynomials used in the construction 
(see~lemma~\ref{thmLOneCompactFlatness}). 

\begin{lem}\label{lemLOneEstNearZero}
A flat polynomial $\cP(\tau_1)$ on $\Set{R}$ build for the function 
\begin{equation}
	\om(y) = M\frac{q}{\beta^2} e^{\beta y/q}. 
\end{equation}
can be estimated near zero as follows: 
\begin{equation}
	\|\cP\|_{L^1(-a,a)} = O\!\left( \frac{\log aq}{\sqrt{q}} \right), \qquad a < 1.  
\end{equation}
\end{lem}

Using this lemma one can see that for any small neighbourhood $U_0$ 
of a~limit line the intersections $U_0 \cap \cF_n$ become disjoint starting at some indedx~$n_1$ 
and 
\begin{equation}
	\sum_{n \ge n_1} \|\cP_n|_{U_0 \cap \cF_n}\|_1 < \infty. 
\end{equation}
This calculation cannot be applied to the case (a), since the polynomials $\cP_n(\tau)$ 
are multiplied and if we loose disjointness of $\cF_n$ it will be impossible to control 
the Riesz product behaviour. 

The case (c) corresponds to the area of flatness. 
\end{proof}

We have considered only the case of rank one $\Set{R}^2$-actions 
and the same effects remain for $\Set{R}^d$-actions. 
Thus, the existence of rank one actions with Lebesgue spectrum 
is established using the same technique. 

\bigskip
The author is very greatly to El.~Houcein El~Abdalaoui, Bassam Fayad, Mariusz Lemanczyk 
and Jean-Paul Thouvenot for the deep and helpful discussions concerning the first part of this paper.



\bibliographystyle{amsplain}
\bibliography{IcePaperI}
 
\end{document}